\definecolor{lightblue}{HTML}{00F9DE}
\tikzstyle{block} = [draw, fill=white, rectangle, 
\numberwithin{equation}{section}
\newtheorem{theorem}{Theorem}[section]
\newtheorem{proposition}[theorem]{PROPOSITION}
\newtheorem{corollary}[theorem]{Corollary}
\theoremstyle{remark}
\newtheorem{definition}[theorem]{Definition}
\newtheorem*{example}{Example}
\newtheorem{assumption}[theorem]{ASSUMPTION}
\newcommand{\indep}{\perp \!\!\! \perp}
\DeclareFontFamily{OT1}{pzc}{}
\DeclareFontShape{OT1}{pzc}{m}{it}{<-> s * [1.10] pzcmi7t}{}
\DeclareMathAlphabet{\mathpzc}{OT1}{pzc}{m}{it}
\title{Hierarchical Causal Analysis of Natural Languages on a Chain Event Graph}
\date{} 					
\author{{\hspace{1mm}Xuewen Yu}\thanks{Xuewen Yu is funded by
	\emph{EPSRC} and the \emph{Statistics Department of the University of Warwick}.} \\
	Department of Statistics\\
	 University of Warwick\\
	Coventry, CV4 7AL, UK \\
	\texttt{xuewen.yu@warwick.ac.uk} \\
	\And
	{\hspace{1mm}Jim Q. Smith} \thanks{Professor Jim Q. Smith is supported by the
	\emph{Alan Turing Institute} and \emph{EPSRC} with grant number \emph{EP/K039628/1}.}\\
	Department of Statistics\\
	University of Warwick\\
	Coventry, CV4 7AL, UK \\
	The Alan Turing Institute\\
	London, NW1 2DB, UK\\
	\texttt{j.q.smith@warwick.ac.uk} \\
}
\begin{document}
\maketitle

\begin{abstract}
	Various graphical models are widely used in reliability to provide a qualitative description of domain experts' hypotheses about how a system might fail. Here we argue that the semantics developed within standard causal Bayesian networks are not rich enough to fully capture the intervention calculus needed for this domain and a more tree-based approach is necessary. We instead construct a Bayesian hierarchical model with a chain event graph at its lowest level so that typical interventions made in reliability models are supported by a bespoke causal calculus. We then demonstrate how we can use this framework to automate the process of causal discovery from maintenance logs, extracting natural language information describing hypothesised causes of failures. Through our customised causal algebra we are then able to make predictive inferences about the effects of a variety of types of remedial interventions. The proposed methodology is illustrated throughout with examples drawn from real maintenance logs. 
\end{abstract}


\keywords{Causality \and Chain Event Graphs \and intervention calculus \and reliability engineering}

\section{Introduction}
In this paper we develop a new class of Bayesian graphical model whose semantics are customised to supporting a causal analysis for reliability. 
More specifically we build a theoretical framework that transforms standard methods of causal analyses so these can be applied to the systematic extraction of causal relationships of particular interest from natural language descriptions embedded in engineering reports about what they believe might explain malfunction or potential malfunction they observe. Within this domain such reports provide critical information about causal explanations of different failure events.

Despite the popularity of the Bayesian network (BN) framework for exploring causal relationships like this, Shafer\citep{shafer1996} and others have argued that event tree based inference provides an even more flexible and expressive graph from which to explore causal relationships. Within the domain of reliability where the focus of inference is on explaining and repairing failure incidents in a system, the use of trees and their derivative depictions such as chain event graphs (CEG) \citep{Collazo17,Collazo2018, Collazo18} are especially efficacious. Here a collection of paths on these graphs are used to explain the unfolding of events that might have led to the fault. One key advantage of the CEG is that unlike the BN the asymmetric unfoldings of the process can be directly represented by its topology \citep{Barclay2013}. This provides a vehicle through which to define a direct mapping from the collections of features found by natural language processors on to a probability model that faithfully represents their explanatory statements. In Section \ref{sec:ceg} we illustrate how this map can be constructed for a given system.

Recently natural language processing has been developed so that it is able to extract from natural languages texts various primitive causal asserts in terms of a symbolic logical dictionary \citep{Chambers2014,Mirza2016}. However to our knowledge no methodology has yet been devised that extracts the features encoded within the text that directly map on to a causal algebra, which like the BN can then be associated with families of probability models of both an idle system and the system under intervention. For example, given a document recording that ``\textit{the seal deterioration caused oil leak in the conservator - topping up oil}'', the proposed framework enables us to analyse the root cause of this fault and predict the effect of the remedial act of adding oil on the equipment. Figure \ref{fig:framework} sketches the architecture behind the novel hierarchical causal model. We call this a \textit{GN-CEG} causal model. It has a BN at its surface level and a CEG at its deeper level. The CEG depicting the causality embedding provides the framework for a probability model faithful to an engineer's hypothesises about what might have happened. Therefore it can be used to answer and evaluate quantitatively dependency queries about any malfunction. In Section \ref{sec:nlp}, we explain in detail how to preprocess the texts data and how to construct the novel hierarchical causal framework while Section \ref{sec:rc} formalises the nested causality.  

\begin{figure}
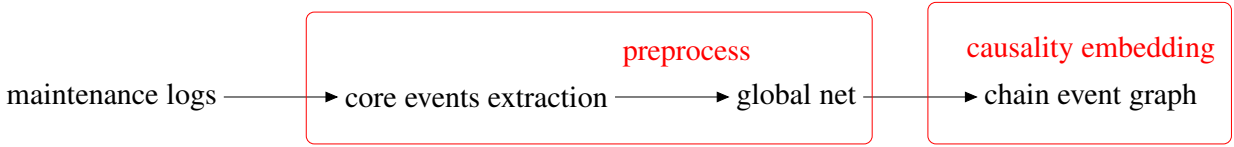

    \centering
    \resizebox{\linewidth}{!}{
    \tikz{
    \node[font=\fontsize{15}{0}\selectfont](n1){\text{maintenance logs}};
    \node[right = 2cm of n1,font=\fontsize{15}{0}\selectfont](n2){\text{core events extraction}};
    \node[right = 2cm of n2,font=\fontsize{15}{0}\selectfont](n3){\text{global net}};
    \node[right = 2cm of n3,font=\fontsize{15}{0}\selectfont](n4){\text{chain event graph}};
    \node[above right= 0.1cm and 0.01cm of n2,font=\fontsize{15}{0}\selectfont,color=red](n5){\text{preprocess}};
    \node[above  = 0.1cm of n4,font=\fontsize{15}{0}\selectfont,color=red](n6){\text{causality embedding}};
    \edge{n1}{n2};
    \edge{n2}{n3};
    \edge{n3}{n4};
    \plate[inner sep=0.35cm, xshift=-0.2cm, yshift=0.12cm,color=red]{plate}{(n2)(n3)(n5)}{};
    \plate[inner sep=0.35cm, xshift=-0.2cm, yshift=0.12cm,color=red]{plate2}{(n4)(n6)}{};
    }
    
    }
    \caption{The proposed hierarchical causal framework.}
    \label{fig:framework}
\end{figure}

Thus we show that it is possible to build an event tree based probabilistic intervention calculus - parallel to those devised by Spirtes \textit{et.al} \citep{Spirtes2000} and Pearl \citep{Pearl2000} - to translate the natural language explanations of engineers about the likely causes of a failure - into families of probability models.
The causal algebra needed for this domain is rather different from more familiar ones. For example when components of a system are replaced, they will behave as if starting from new rather than starting from a point of embedded usage. The latter would be what we would need to assume were we to use conventional algebras. So we need to develop rather different formulae for causal predictions from those devised by Pearl for BNs \citep{Pearl2000} and their translation into tree based causal inference \citep{Thwaites10,Thwaites13}. 

Furthermore, because of the routine use of the terms within reliability analyses - we have discovered the usefulness of an object called a \textit{remedy}: a collection of acts designed to attempt to rectify the root cause of a failure incident. As illustrated above, potential remedies are especially useful objects in this context because we find that they can often be extracted directly from the natural language texts engineers routinely provide. Interventions associated with remedial acts are in practice nearly always very specific types of non-atomic (not singular)\citep{Pearl2000,Thwaites13} interventions. So a semantic needs to be developed to properly encode these. In Section \ref{sec:intervention} we provide these semantics to elegantly express these naturally arising composites.

One critical output of such a causal analysis is to determine when, under the hypotheses we use, we are able to make complete probabilistic predictions of the effects of an intervention when the system can only be partially observed. In Section \ref{sec:identifiability} we prove a number of analogues of so called back-door theorems \citep{Pearl2000,Thwaites10,Thwaites13} developed in other settings customised to the particular needs of this domain.

The maintenance logs may not provide complete information about a failure or deteriorating process and its maintenance procedures. Therefore when translating causal events expressed by natural languages to a causal CEG, the missingness must be captured. In Section \ref{sec:missing}, we explain graphically and mathematically how to formally manage different types of missing information within the new causal analyses and develop an extension of the back-door criterion\citep{Saadati2019} for addressing the missingness mechanism.

We begin the paper by recalling some of the definitions of the CEG we use at the deepest level of our hierarchical model.

\section{The causal CEG}\label{sec:ceg}
Let an event tree $T=(E_T,V_T)$ have an associated probability vector $\bm{\theta}_T=(\bm{\theta}_v)_{v\in V_T}$, called the \textit{primitive probabilities} of the tree \citep{Thwaites10}. Any edge emanating from $v\in V_T$, denoted by  $e_{v,v'}\in E_T$, is annotated by a probability $\theta_{v,v'}\in \bm{\theta}_v$. Let $ch(v)=\{v'\in V_T|e_{v,v'}\in E_T\}$ denote the set of children of vertex $v$. The pair $(T,\bm{\theta}_T)$ is called a \textit{probability tree} \citep{Gorgen2018} when $\sum_{v'\in ch(v)}\theta_{v,v'}=1$ and $\theta_{v,v'}\in (0,1)$ for all $v\in V_T$.

Two vertices $v$ and $w$ are said to be at the same \textit{stage} if $\bm{\theta}_v=\bm{\theta}_w$ up to a permutation of their components \citep{Collazo2018}. Each stage and vertices in the same stage are assigned a unique colour. 
The resulting coloured tree $(T,\bm{\theta}_T)$ is called a \textit{staged tree}.  

Let $T(v)\subseteq T$ represent the full subtree which is rooted at $v\in V_T$ and comprises all the $v-$to-leaf paths in $T$. Then two vertices $v,w\in V_T$ are in the same \textit{position} if $(T(v),\bm{\theta}_{T(v)})$ and $(T(w),\bm{\theta}_{T(w)})$ have isomorphic graphs with all their associated edge probabilities also equated \citep{Barclay2015, Collazo2018}.

\begin{definition} A \textit{Chain Event Graph} (CEG) $(C,\bm{\theta}_T)$ is constructed from a staged tree $(T,\bm{\theta}_T)$. The graph of a CEG is represented by $C=(V_C,E_C)$. The vertex set $V_C$ is given by the set of positions in the underlying staged tree. Every position inherits its colour from the staged tree. If there exist edges $e_{v,v'}$,$e'_{w,w'}\in E_T$ and $v,w$ are in the same position, then there exist corresponding edges $f, f'\in E_C$. If also $v',w'$ are in the same position, then $f=f'$. The probabilities $\theta_f$ of edge $f\in E_C$ in the new graph are inherited from the corresponding edges $e\in E_T$ in the staged tree \citep{Barclay2015, Collazo2018}.
\end{definition}

A \textit{floret} on a CEG $C$ is a pair $F(v)=(v,E(v))$, where $v\in V_C$ is a vertex of $C$ and $E(v)=\{(v,v')\in E_C|v'\in ch(v)\}$ is the set of emanating edges from $v$ \citep{ Collazo2018}. Let $\Lambda(C)$ denote the set of  root to sink paths on $C$ and $\lambda\in \Lambda(C)$ denote a single path.

\begin{figure}
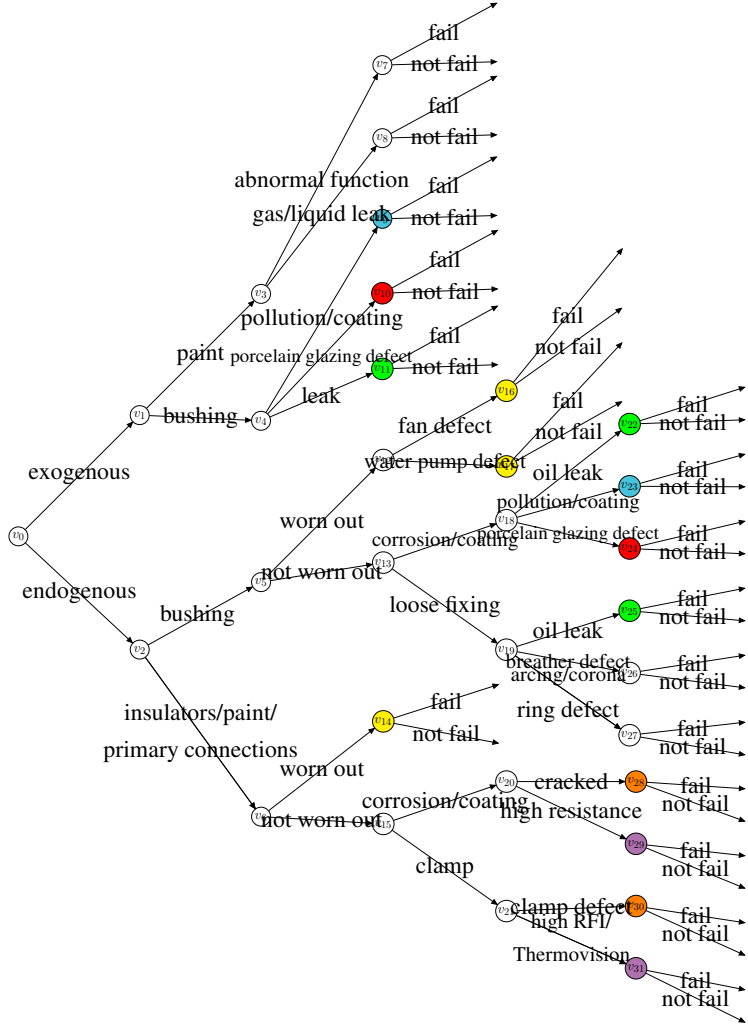

    \centering
    \resizebox{0.6\linewidth}{!}{
    \tikz{
    \node[latent,font=\fontsize{15}{0}\selectfont](v0){$v_0$};
    \node[latent, above right = 4cm and 4cm of v0,font=\fontsize{15}{0}\selectfont](v1){$v_1$};
    \node[latent,below = 8cm of v1,font=\fontsize{15}{0}\selectfont](v2){$v_2$};
    \node[latent,above right = 4cm and 4cm of v1,font=\fontsize{15}{0}\selectfont](v3){$v_3$};
    \node[latent,below = 4cm of v3,font=\fontsize{15}{0}\selectfont](v4){$v_4$};
    \node[latent,above right = 2cm and 4cm of v2,font=\fontsize{15}{0}\selectfont](v5){$v_5$};
    \node[latent,below = 8cm of v5,font=\fontsize{15}{0}\selectfont](v6){$v_6$};
    \node[latent,above right = 8cm and 4cm of v3,font=\fontsize{15}{0}\selectfont](v7){$v_7$};
    \node[latent,below = 2cm of v7,font=\fontsize{15}{0}\selectfont](v8){$v_8$};
    
     \node[latent,above right = 7cm and 4cm of v4,font=\fontsize{15}{0}\selectfont,fill=SkyBlue](v9){$v_9$};
    \node[latent,below = 2cm of v9,font=\fontsize{15}{0}\selectfont,fill=red](v10){$v_{10}$};
    \node[above right = 2cm and 4cm of v7](s1){};
    \node[below = 2cm of s1](s2){};
    \node[above right = 2cm and 4cm of v8](s3){};
    \node[below = 2cm of s3](s4){};
    
    \node[above right = 2cm and 4cm of v9](s5){};
    \node[below = 2cm of s5](s6){};
    \node[above right = 2cm and 4cm of v10](s7){};
    \node[below = 2cm of s7](s8){};
    
    \node[latent,below = 2cm of v10,font=\fontsize{15}{0}\selectfont,fill=green](v66){$v_{11}$};
    \node[above right = 2cm and 4cm of v66](s9){};
    \node[below = 2cm of s9](s10){};
    
    \node[latent, above right = 4cm and 4cm of v5,font=\fontsize{15}{0}\selectfont](v11){$v_{12}$};
    \node[latent,below = 3cm of v11,font=\fontsize{15}{0}\selectfont](v12){$v_{13}$};
    
    \node[latent, above right = 3cm and 4cm of v6,font=\fontsize{15}{0}\selectfont,fill=yellow](v13){$v_{14}$};
    \node[latent,below = 3cm of v13,font=\fontsize{15}{0}\selectfont](v14){$v_{15}$};

   \node[latent,above right = 2cm and 4cm of v11,font=\fontsize{15}{0}\selectfont,fill=yellow](v15){$v_{16}$};;
   \node[latent,below = 2cm of v15,font=\fontsize{15}{0}\selectfont,fill=yellow](v16){$v_{17}$};
    
    \node[latent,above right =1cm and 4cm of v12,font=\fontsize{15}{0}\selectfont](v17){$v_{18}$};
    \node[latent,below = 4cm of v17,font=\fontsize{15}{0}\selectfont](v18){$v_{19}$};
    
    \node[above right = 1cm and 4cm of v13](v19){};
    \node[below = 2cm of v19](v20){};
    
     \node[latent,above right =  1cm and 4cm of v14,font=\fontsize{15}{0}\selectfont](v21){$v_{20}$};
    \node[latent,below = 4cm of v21,font=\fontsize{15}{0}\selectfont](v22){$v_{21}$};
    
    \node[above right = 5cm and 4cm of v15](r1){};
    \node[below = 2cm of r1](r2){};
    \node[below = 1cm of r2](r3){};
    \node[below = 2cm of r3](r4){};
    
    \node[latent,above right = 3cm and 4cm of v17,font=\fontsize{15}{0}\selectfont,fill=green](v23){$v_{22}$};
    \node[latent,below = 1.5cm of v23,font=\fontsize{15}{0}\selectfont,fill=SkyBlue](v24){$v_{23}$};
    \node[latent,below = 1.5cm of v24,font=\fontsize{15}{0}\selectfont,fill=red](v25){$v_{24}$};
    \node[latent,below = 1.5cm of v25,font=\fontsize{15}{0}\selectfont,fill=green](v26){$v_{25}$};
    \node[latent,below = 1.5cm of v26,font=\fontsize{15}{0}\selectfont](v27){$v_{26}$};
    \node[latent,below = 1.5cm of v27,font=\fontsize{15}{0}\selectfont](v28){$v_{27}$};
    
    \node[latent, right = 4cm of v21,font=\fontsize{15}{0}\selectfont,fill=orange](v29){$v_{28}$};
    \node[latent,below = 1.5cm of v29,font=\fontsize{15}{0}\selectfont,fill=Orchid](v30){$v_{29}$};
    \node[latent,below = 1.5cm of v30,font=\fontsize{15}{0}\selectfont,fill=orange](v31){$v_{30}$};
    \node[latent,below = 1.5cm of v31,font=\fontsize{15}{0}\selectfont,fill=Orchid](v32){$v_{31}$};
    
    \node[above right = 1cm and 4cm of v23](h1){};
    \node[below = 1cm of h1](h2){};
    
    \node[below = 1cm of h2](h3){};
    \node[below = 1cm of h3](h4){};
    
    \node[below = 1cm of h4](h5){};
    \node[below = 1cm of h5](h6){};
    
    \node[below = 1cm of h6](h7){};
    \node[below = 1cm of h7](h8){};
    
    \node[below = 1cm of h8](h9){};
    \node[below = 1cm of h9](h10){};
    
    \node[below = 1cm of h10](h11){};
    \node[below = 1cm of h11](h12){};
    
    \node[below = 1cm of h12](h13){};
    \node[below = 1cm of h13](h14){};
    \node[below = 1cm of h14](h15){};
    \node[below = 1cm of h15](h16){};
    
    \node[below = 1cm of h16](h17){};
    \node[below = 1cm of h17](h18){};
    \node[below = 1cm of h18](h19){};
    \node[below = 1cm of h19](h20){};

    \path[->,draw]
   (v0)edge node[scale=2.5]{\text{exogenous}}(v1)
   (v0)edge node[scale=2.5]{\text{endogenous}}(v2) 
   (v1)edge node[scale=2.5]{\text{paint}}(v3)
   (v1)edge node[scale=2.5]{\text{bushing}}(v4)
   (v2)edge node[scale=2.5]{\text{bushing }}(v5)
    (v2)edge[above] node[scale=2.5]{\text{insulators/paint/}}(v6)
    (v2)edge[below] node[scale=2.5]{\text{primary connections}}(v6)
    (v3)edge node[scale=2.5]{\text{abnormal function}}(v7)
    (v3)edge node[scale=2.5]{\text{gas/liquid leak}}(v8)
    (v4)edge node[scale=2.5]{\text{pollution/coating}}(v9)
    (v4)edge node[scale=2]{\text{porcelain glazing defect}}(v10)
    (v7)edge node[scale=2.5]{\text{fail}}(s1)
    (v7)edge node[scale=2.5]{\text{not fail}}(s2)
    (v8)edge node[scale=2.5]{\text{fail}}(s3)
    (v8)edge node[scale=2.5]{\text{not fail}}(s4)
    (v9)edge node[scale=2.5]{\text{fail}}(s5)
    (v9)edge node[scale=2.5]{\text{not fail}}(s6)
    (v10)edge node[scale=2.5]{\text{fail}}(s7)
    (v10)edge node[scale=2.5]{\text{not fail}}(s8)
    (v5)edge node[scale=2.5]{\text{worn out}}(v11)
    (v5)edge node[scale=2.5]{\text{not worn out}}(v12)
    (v6)edge node[scale=2.5]{\text{worn out}}(v13)
    (v6)edge node[scale=2.5]{\text{not worn out}}(v14)
    (v11)edge node[scale=2.5]{\text{fan defect}}(v15)
    (v11)edge node[scale=2.3]{\text{water pump defect}}(v16)
    (v12)edge node[scale=2.2]{\text{corrosion/coating}}(v17)
    (v12)edge node[scale=2.5]{\text{loose fixing}}(v18)
    (v13)edge node[scale=2.5]{\text{fail}}(v19)
    (v13)edge node[scale=2.5]{\text{not fail}}(v20)
    (v14)edge node[scale=2.5]{\text{corrosion/coating}}(v21)
    (v14)edge node[scale=2.5]{\text{clamp}}(v22)
    
    (v15)edge node[scale=2.5]{\text{fail}}(r1)
    (v15)edge node[scale=2.5]{\text{not fail}}(r2)
    (v16)edge node[scale=2.5]{\text{fail}}(r3)
    (v16)edge node[scale=2.5]{\text{not fail}}(r4)
    
    (v17)edge node[scale=2.5]{\text{oil leak}}(v23)
    (v17)edge node[scale=2.2]{\text{pollution/coating}}(v24)
    (v17)edge node[scale=2]{\text{porcelain glazing defect}}(v25)
    (v18)edge node[scale=2.5]{\text{oil leak}}(v26)
    (v18)edge node[scale=2.2]{\text{breather defect}}(v27)
    (v18)edge[above] node[scale=2.2]{\text{arcing/corona}}(v28)
    (v18)edge[below] node[scale=2.5]{\text{ring defect}}(v28)
    (v21)edge node[scale=2.5]{\text{cracked}}(v29)
    (v21)edge node[scale=2.5]{\text{high resistance}}(v30)
    (v22)edge node[scale=2.5]{\text{clamp defect}}(v31)
    (v22)edge[above] node[scale=2.2]{\text{high RFI/}}(v32)
    (v22)edge[below] node[scale=2.2]{\text{Thermovision}}(v32)
    (v23)edge node[scale=2.5]{\text{fail}}(h1)
    (v23)edge node[scale=2.5]{\text{not fail}}(h2)
    (v24)edge node[scale=2.5]{\text{fail}}(h3)
    (v24)edge node[scale=2.5]{\text{not fail}}(h4)
     (v25)edge node[scale=2.5]{\text{fail}}(h5)
    (v25)edge node[scale=2.5]{\text{not fail}}(h6)
     (v26)edge node[scale=2.5]{\text{fail}}(h7)
    (v26)edge node[scale=2.5]{\text{not fail}}(h8)
     (v27)edge node[scale=2.5]{\text{fail}}(h9)
    (v27)edge node[scale=2.5]{\text{not fail}}(h10)
     (v28)edge node[scale=2.5]{\text{fail}}(h11)
    (v28)edge node[scale=2.5]{\text{not fail}}(h12)
    (v29)edge node[scale=2.5]{\text{fail}}(h13)
    (v29)edge node[scale=2.5]{\text{not fail}}(h14)
    (v30)edge node[scale=2.5]{\text{fail}}(h15)
    (v30)edge node[scale=2.5]{\text{not fail}}(h16)
    (v31)edge node[scale=2.5]{\text{fail}}(h17)
    (v31)edge node[scale=2.5]{\text{not fail}}(h18)
    (v32)edge node[scale=2.5]{\text{fail}}(h19)
    (v32)edge node[scale=2.5]{\text{not fail}}(h20)
    (v66)edge node[scale=2.5]{\text{fail}}(s9)
    (v66)edge node[scale=2.5]{\text{not fail}}(s10)
    (v4)edge node[scale=2.5]{\text{leak}}(v66)
     }}
    
    \caption{An example of a staged tree for the system of bushing.}
    \label{fig:event tree}
\end{figure}

A causal CEG orders events along its root-to-sink paths to be consistent with any hypothesised temporal ordering of events. This is especially useful when analysing the effects of a cause which through these semantics will appear downstream of the position on CEG that represents the cause. 

Our first step in customising the CEG framework to system reliability is to make a simple extension of the original definition by splitting the sink node $w_{\infty}$ into two terminal nodes. 
\begin{definition} A \textit{failure CEG} (FCEG) is a CEG with two \textit{terminal nodes} $w_{\infty}^f$ and $w_{\infty}^n$ so that the root-to-leaf path terminates at $w_{\infty}^f$ models the failure process of a system while the path terminates at $w_{\infty}^n$ expresses that the system has not yet failed. 
\end{definition}
In the rest of the paper, all the CEGs we refer to are FCEGs.
\begin{example}\label{fceg example}
The staged tree in Figure \ref{fig:event tree} describes some of the failure modes of a bushing, which is a component of a transformer. Suppose this represents our hypothesised probability model based on both expert judgements and available data. A CEG can be automatically constructed from this tree and is shown in Figure \ref{fig:ceg}. From this CEG we can conclude that the failure event of being worn out is independent of the components and symptoms for this system. The failure caused by the symptoms represented on $v_4$ and $v_{18}$ is independent of other events represented before these symptoms. Since $v_{28}$ and $v_{30}$ are at the same stage and in the same position $w_{20}$, we can read from this graph that a cracked component or a clamp defect are equally likely explanations of a failure event. 
\end{example}

\begin{figure}
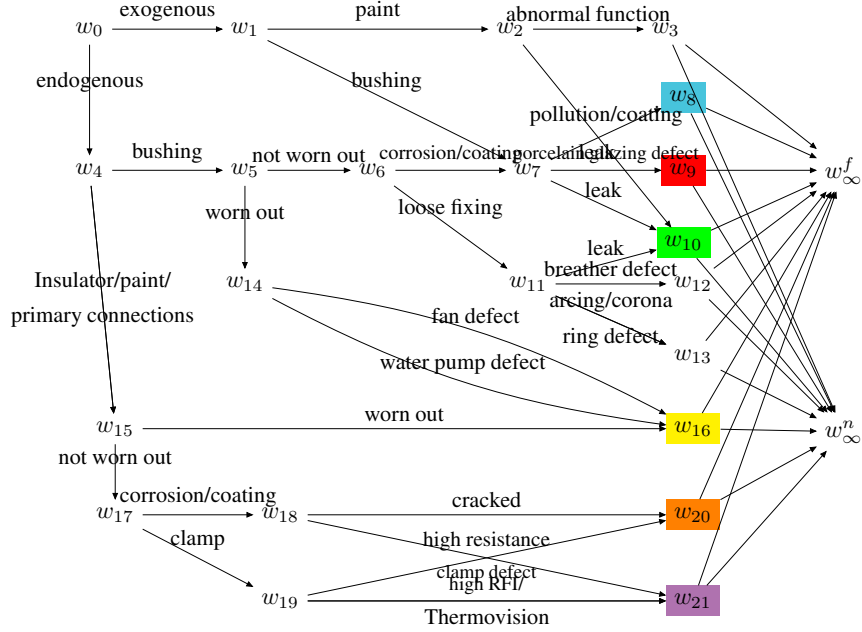

    \centering
    \resizebox{0.7\linewidth}{!}{
    \tikz{
    \node[scale=2.5](w0){$w_0$};
    \node[right = 4cm of w0,scale=2.5](w1){$w_1$};
    \node[right = 8cm of w1,scale=2.5](w2){$w_2$};
    \node[right = 4cm of w2,scale=2.5](w3){$w_3$};
    \node[below = 4cm of w0,scale=2.5](w5){$w_4$};
    \node[right = 4cm of w5,scale=2.5](w6){$w_5$};
    \node[right = 3cm of w6,scale=2.5](w7){$w_6$};
    \node[right = 4cm of w7,scale=2.5](w8){$w_7$};
    \node[right = 4cm of w8,scale=2.5,fill=red](w10){$w_{9}$};
    \node[above = 1.5cm of w10,scale=2.5,fill=SkyBlue](w9){$w_8$};
    \node[below = 1.5cm of w10,scale=2.5,fill=green](w11){$w_{10}$};
    
    \node[below = 3cm of w8,scale=2.5](w12){$w_{11}$};
    \node[right = 4cm of w12,scale=2.5](w13){$w_{12}$};
    \node[below = 1.5cm of w13,scale=2.5](w14){$w_{13}$};
    \node[below = 1.5cm of w14,scale=2.5,fill=yellow](w17){$w_{16}$};
    \node[left = 19cm of w17,scale=2.5](w16){$w_{15}$};
    \node[below = 3cm of w6,scale=2.5](w15){$w_{14}$};
    
    \node[below = 2cm of w16,scale=2.5](w18){$w_{17}$};
    \node[right = 4cm of w18,scale=2.5](w19){$w_{18}$};
    \node[below = 2cm of w19,scale=2.5](w20){$w_{19}$};
    \node[right = 13cm of w19,scale=2.5,fill=orange](w21){$w_{20}$};
    \node[below = 2cm of w21,scale=2.5,fill=Orchid](w22){$w_{21}$};
    \node[right = 4cm of w10,scale=2.5](wf){$w_{\infty}^f$};
    \node[below = 8cm of wf,scale=2.5](wn){$w_{\infty}^n$};
    
     \path[->,draw]
   (w0)edge[above] node[scale=2.3]{\text{exogenous}}(w1)
   (w1)edge[above] node[scale=2.3]{\text{paint}}(w2)
   (w2)edge[above] node[scale=2.3]{\text{abnormal function}}(w3)
   (w2)edge[below] node[scale=2.3]{\text{leak}}(w11)
   (w0)edge[above] node[scale=2.3]{\text{endogenous}}(w5)
   (w5)edge[above] node[scale=2.3]{\text{bushing}}(w6)
   (w6)edge[above] node[scale=2.3]{\text{not worn out}}(w7)
   (w7)edge[above] node[scale=2.1]{\text{corrosion/coating}}(w8)
   (w8)edge[above] node[scale=2.3]{\text{pollution/coating}}(w9)
   (w8)edge[above] node[scale=2]{\text{porcelain glazing defect}}(w10)
   (w8)edge[above] node[scale=2.3]{\text{leak}}(w11)
   (w1)edge[above] node[scale=2.3]{\text{bushing}}(w8)
   (w7)edge[above] node[scale=2.3]{\text{loose fixing}}(w12)
   (w12)edge[above] node[scale=2.3]{\text{leak}}(w11)
   (w12)edge[above] node[scale=2.3]{\text{breather defect}}(w13)
   (w12)edge[above] node[scale=2.3]{\text{arcing/corona}}(w14)
   (w12)edge[below] node[scale=2.3]{\text{ring defect}}(w14)
   (w5)edge[above] node[scale=2.3]{\text{Insulator/paint/}}(w16)
   (w5)edge[below] node[scale=2.3]{\text{primary connections}}(w16)
   (w16)edge[above] node[scale=2.3]{\text{worn out}}(w17)
   (w6)edge[above] node[scale=2.3]{\text{worn out}}(w15)
   (w15)edge[above,bend left=10] node[scale=2.3]{\text{fan defect}}(w17)
   (w15)edge[above,bend right=10] node[scale=2.3]{\text{water pump defect}}(w17)
   (w16)edge[above] node[scale=2.3]{\text{not worn out}}(w18)
   (w18)edge[above] node[scale=2.3]{\text{corrosion/coating}}(w19)
   (w18)edge[above] node[scale=2.3]{\text{clamp}}(w20)
   (w19)edge[above] node[scale=2.3]{\text{cracked}}(w21)
   (w19)edge[above] node[scale=2.2]{\text{high resistance}}(w22)
   (w20)edge[below] node[scale=2]{\text{clamp defect}}(w21)
   (w20)edge[above] node[scale=2]{\text{high RFI/}}(w22)
   (w20)edge[below] node[scale=2.3]{\text{Thermovision}}(w22)
   (w3)edge node[scale=1.5]{}(wf)
   (w9)edge node[scale=1.5]{}(wf)
   (w10)edge node[scale=1.5]{}(wf)
   (w11)edge node[scale=1.5]{}(wf)
   (w13)edge node[scale=1.5]{}(wf)
   (w14)edge node[scale=1.5]{}(wf)
   (w17)edge node[scale=1.5]{}(wf)
   (w21)edge node[scale=1.5]{}(wf)
   (w22)edge node[scale=1.5]{}(wf)
   (w3)edge node[scale=1.5]{}(wn)
   (w9)edge node[scale=1.5]{}(wn)
   (w10)edge node[scale=1.5]{}(wn)
   (w11)edge node[scale=1.5]{}(wn)
   (w13)edge node[scale=1.5]{}(wn)
   (w14)edge node[scale=1.5]{}(wn)
   (w17)edge node[scale=1.5]{}(wn)
   (w21)edge node[scale=1.5]{}(wn)
   (w22)edge node[scale=1.5]{}(wn)
    }}
    
    \caption{A CEG derived from the staged tree in Figure \ref{fig:event tree}.}
    \label{fig:ceg}
\end{figure}

\section{The mapping of a document on to a CEG}\label{sec:nlp} In order to embed causal objects within a CEG, we design a framework that maps texts in natural language on to paths of a CEG. Let $D$ represent all documents we pick for a specific context after basic text cleaning, for example the faults related to a specific system. 
 Each document consists of a sequence of $N_s$ words $\bm{s}=\{s_{1},...,s_{N_s}\}$, and $\bm{s}\in D$. We next define a function $J:(\bm{s},\Omega)\mapsto\lambda$. This enables us to map texts from each document on to a root-to-sink path $\lambda\in\Lambda(C)$ depicted on the CEG of the hypothesised process. Below we carefully define the required parameters set $\Omega$ and show in detail how to construct $J$.

We begin with preprocessing the documents. As demonstrated in Figure \ref{fig:framework}, this proceeds along two steps: (1) constructing a map to extract the causally ordered events from texts; (2) embedding the dependency between the extracted events within a causal network. Firstly we propose a new map for events extraction. This is inspired by Mirza and Tonelli \citep{Mirza2016} who exploited the interaction between the causal and the temporal dimensions of texts. Our approach combines two existing methods: the CAscading EVent Ordering architecture (CAEVO) \citep{Chambers2014} and the hierarchical causality network designed by Zhao \textit{et.al} \citep{causalnet}. The former extracts temporally ordered events whilst the latter extracts causally related events. Our contribution here is to add the temporally ordered events extracted from CAEVO to causally related events. This enables us to refine the sequences of the extracted events in a way that is compatible with a CEG representation. Note that by adopting the first work we have therefore implicitly made the common assumption - obviously applicable within this context -  a cause must happen before its effect.

We firstly specify grammar rules $\Omega_r$ to parse a sentence into phrases. Then we customise a set of \textit{shallow causal patterns} $\Omega_{s}$. This is defined as a set of tuples $\{(c,<A,B>,r_c(A,B))\}$, where $c$ denotes causal connectives which are trigger words of the \textit{shallow causal dependencies}, $<A,B>$ denotes the pair of events in this sentence that are connected by $c$ and $r_c(A,B)$ the relationship between $A$ and $B$ given $c$. 

Given $ \Omega_{NL}=\{\Omega_r,\Omega_{s}\}$, we can define a function $\sigma$ to extract the ordered events from each document $\bm{s}$. In this paper we call such events \textit{core events}. Denote these $n_u$ events by $\bm{u}=\{u_1,...,u_{n_u}\}$ and the ordering of $\bm{u}$ by $\pi_{\bm{u}}$. Let $\bm{U}_D$ and $\bm{\Pi}_D$ represent respectively the set of all core events and all possible orderings over them for dataset $D$. Then the codomian of $\sigma$ is $\bm{U}_D\times\bm{\Pi}_{D}$. We write this function as:
$$\sigma:(\bm{s},\Omega_{NL})\mapsto (\bm{u},\pi_{\bm{u}}).$$  The function $\sigma$ can be decomposed into a sequence of functions as explained below.
\subsection{The $\sigma$ function}\label{sequence of functions}
We first define a function $\alpha:(\bm{s},\Omega_r)\mapsto\bm{s}_{\bm{A}}$ that inputs a document $\bm{s}\in D$ and the known set of rules $\Omega_r$ that are used to parse a sentence into phrases and returns a set of $n$ phrases $\bm{s}_{\bm{A}} = \{\bm{s}_{A_1},...,\bm{s}_{A_n}\}$. A phrase $\bm{s}_{A_j}$ is a subvector of $\bm{s}$ whose subscript $A_j$ is a set of word indices $\{a_j,a_j+1,...,a_j+m_j\}\subseteq \{1,...,N_s\}$. Let $\bm{A}=\{A_1,...,A_n\}$. Note that $A_i\cap A_j=\emptyset$ for any $A_i,A_j\in \bm{A}$.

 Following the methods introduced by Zhao \textit{et.al} \citep{causalnet}, we next extract causality mentions $\bm{s}_{\bm{B}} = \{(\bm{s}_{B_1},\bm{s}_{B_2}),...,(\bm{s}_{B_{m-1}},\bm{s}_{B_{m}})\}$, where $\bm{B}=\{(B_1,B_2),...,(B_{m-1},B_{m})\}$. Each pair of causality mentions is a pair of cause-effect events extracted using the customised linguistic patterns $\Omega_s$. If more than one pattern is satisfied, then we output multiple pairs of causality mentions through the map $\beta:(\bm{s},\Omega_{s})\mapsto\bm{s}_{\bm{B}}$. Each causality mention $\bm{s}_{B_j}$ is a subvector of $\bm{s}$ with word indices $B_j=\{b_j,b_{j}+1,...,b_{j}+k_j\}\subseteq \{1,...,N_s\}$. 

We next introduce a new map $\gamma: (\bm{s}_{\bm{A}},\bm{s}_{\bm{B}})\mapsto\bm{s}_{\bm{C}}$ that combines the results from the first two functions and outputs causal phrases $\bm{s}_{\bm{C}} = \{(\bm{s}_{C_1},\bm{s}_{C_2}),...,(\bm{s}_{C_w-1},\bm{s}_{C_{w}})\}$, where  $\bm{C}=\{(C_1,C_2),...,(C_{w-1},C_{w})\}$. By this step, for causality mentions with index sets $(B_j,B_{j+1})$, if $\bm{s}_{B_j}$ consists of phrases in $\bm{s}_{\bm{A}}$, so that there exist $\{A_{j^1},...,A_{j^m}\}\subseteq B_j$ then we collect causal phrases with index sets $\{(C_{l^1},C_{l^2})...,(C_{l^{2m-1}},C_{j^{2m}})\}=\{(A_{j^1},B_{j+1}),...,(A_{j^m},B_{j+1})\}$. If there exists no $A_k\in \bm{A}$ such that $A_k\subseteq B_j$, then $(C_{l^1},C_{l^2})=(B_j,B_{j+1})$.

We can now deploy the idea of abstract causal events first proposed by Zhao \textit{et.al} \citep{causalnet}. Here the nouns and verbs in each specific causal phrase $\bm{s}_{C_j}$ are replaced by their hypernyms in WordNet (WN), denoted by $WN_D$, and classes in VerbNet (VN), denoted by $VN_D$, respectively. The other uninformative words are removed. By doing this we obtain a set of abstract causal events $\bm{v}_{\bm{K}}=\{\bm{v}_{K_1},...,\bm{v}_{K_w}\}$, where $\bm{K}=\{K_1,...,K_w\}$ and $K_j=\{k_{j},...,k_{j}+l\}\subseteq C_j$. The word in the original document $s_{k_j}$ is replaced by $v_{k_j}\in WN_D\cup VN_D$. Let $\pi_{\bm{K}}$ denote the order over $\bm{K}$. Then $(\bm{v}_{\bm{K}},\pi_{\bm{K}}) = \{(\bm{v}_{K_1},\bm{v}_{K_2}),...,(\bm{v}_{K_{w-1}},\bm{v}_{K_{w}})\}$. We represent this procedure by the map $\iota:\bm{s}_{\bm{C}}\mapsto (\bm{v}_{\bm{K}},\pi_{\bm{K}})$.


We next apply an existing programme called CAEVO \citep{Chambers2014} to extract a temporal ordering of events. Denote the set of the extracted events by $\bm{v}_{\bm{\xi}}=\{v_{\xi_1},...,v_{\xi_{n_V}}\}$ and the order of the word indices $\bm{\xi}$ by $\pi_{\bm{\xi}}$. In CAEVO, events usually refer to verbs. So $v_{\xi_j}\in VN_D$ for $j\in\{1,...,n_V\}$. We represent the operations implemented by CAEVO by a function
 $\mu:\bm{s}\mapsto (\bm{v}_{\bm{\xi}},\pi_{\bm{\xi}})$.


To refine the sequences of the ordered events, we now combine results from $\iota$ and $\mu$ by a function
\begin{equation}
\phi:(\bm{v}_{\bm{K}},\pi_{\bm{K}},\bm{v}_{\bm{\xi}},\pi_{\bm{\xi}})\mapsto(\bm{u},\pi_{\bm{u}}).
    \end{equation} 
    When $\pi_{\bm{\xi}}$ and $\pi_{\bm{K}}$ are consistent, then $\pi_{\bm{u}}=\pi_{\bm{K}}$ and $\bm{u}=\bm{v}_{\bm{K}}$. If there exists a partial ordering $\pi'\in\pi_{\bm{\xi}}$ of events $\bm{v}'\subseteq \bm{v}_{\bm{\xi}}$, where $\pi'\notin\pi_{\bm{K}}$ and this is consistent with $\pi_{\bm{K}}$, then we add $\pi'$ to $\pi_{\bm{K}}$: $\pi_{\bm{K}}\leftarrow (\pi_{\bm{K}},\pi')$. For any $v_i\in \bm{v}'$, if $v_{i}\notin \bm{v}_{K}$, we add this event to $\bm{v}_{\bm{K}}$: $\bm{v}_{\bm{K}}\leftarrow(\bm{v}_{\bm{K}},v_{i})$. When $\pi'$ contradicts the causal ordering $\pi_{\bm{K}}$, then $\pi'$ is ignored and $\pi_{\bm{K}}$ remains unchanged. This is because $\pi'$ is a temporal ordering which may not imply the events are casually related. The outcome of $\phi$ is $(\bm{u},\pi_{\bm{u}})=(\bm{v}_{\bm{K}},\pi_{\bm{K}})$.


We can now write $\sigma$ as a composition of the functions listed above by
\begin{equation}
    \sigma(\bm{s},\Omega_{NL})=\phi(\iota(\gamma(\alpha(\bm{s},\Omega_r),\beta(\bm{s},\Omega_{s}))),\mu(\bm{s})).
\end{equation}

 It is straightforward to check (see supplementary material \citep{Yusupp2021}) that the $\sigma$ function is well defined. An illustration of the construction of $\sigma$ is given in the supplementary material \citep{Yusupp2021}. The technical development we outline above includes only the essential element we need to describe the map from the texts onto our graphical models. Further details of these maps, their motivations and coding can be found in \citep{Yu20211}.

\subsection{The Global Net}

Having extracted the cause-effect pairs of core events through $\sigma$, we need to cluster events and the set of partial ordering consistently with the topology of a corresponding CEG, so as not to lose any extracted causal relationships. Therefore we need to add an extra but necessary pre-processing step that registers an implicit partial order on all the core events.

In order to register the order of the core events, we here define a new graphical framework called a \textit{Global Net} (GN) whose topology is a DAG $G^*=(V^*,E^*)$. Each vertex $v\in V^*$ corresponds to a \textit{core event variable} defined in the following way. The core event variables are constructed by clustering the extracted core events $\bm{u}$. These may need to import expert judgements or assumptions to supplement the extracted causal pairs. For example, the three events: ``red phase transformer'', ``yellow phase transformer'', and ``blue phase transformer'', can be taken as three states of a categorical variable $U_i$ that represents the phase of the machine. For the core event ``oil leak'', we create a binary variable to indicate whether oil leak has been observed. The methods used for this are rather domain specific and so we have relegated a description of how this step might be performed to the supplementary material \citep{Yusupp2021}. We call these constructed variables the core event variables and denote them by $\bm{U}=\{U_1,...,U_{n_U}\}$. There is an edge $e_{v_i,v_j}\in E^*$ from $v_i\in V^*$ to $v_j\in V^*$ whenever $v_i$ is a genuine cause \citep{Pearl2000} of $v_j$.

Here we learn the topology of the GN by extracting a BN with constraints on potential causal relations extracted from $(\bm{u},\pi_{\bm{u}})$ and using established methodologies to pick out the genuine causal relationships that we can read from the BN \citep{Pearl2000}. Because the focus of this paper is the causal algebra, for simplicity we adopt the simplest, most transparent and familiar method, the BN, for performing this registration step. 
Let $G=(V,E)$ denote the topology of the BN and $\mathcal{P}^*$ denote the joint probability distribution over $\bm{U}$ defined on $G$, where $V=V^*$ correspond to the core event variables and the edges set $E$ can be learned using any structure learning algorithm \citep{scutari2012}. However, the BN selected by the algorithm alone cannot ensure that every edge represents a putative causal dependency. Therefore, given the extracted cause-effect paired core events, we summarise the likely causal relationships between the constructed variables and accordingly create a list of directed edges that must appear in the BN. We can also create a list of directed edges that should never be present in the BN, which only include those violate the cause-before-effect temporal relations. This step allows us to preserve the putative cause-effect structure extracted from $\sigma$. In this way we attempt to filter through genuine causal relationships \citep{Pearl2000} and distinguish these from relationships that could not properly be hypothesised as causal. To illustrate this process, Figure \ref{fig:global net} (a) gives a BN extacted from the dataset of a subsystem of bushing. This has then been transformed to the GN in Figure \ref{fig:global net} (b) by the method we describe above. More details of this construction are given in the supplementary material \citep{Yusupp2021}.

\begin{figure}
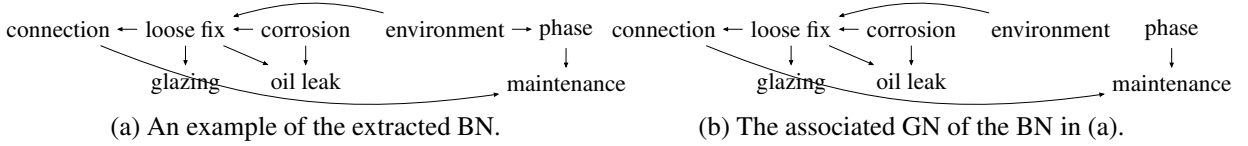

    \centering
   \resizebox{\linewidth}{!}{
    \tikz{
    \node[scale=3.5](r1){environment};
    \node[left = 1cm of r1,scale=3.5](r2){corrosion};
    \node[left = 1cm of r2,scale=3.5](r3){loose fix};
    \node[below = 1cm of r2,scale=3.5](r4){oil leak};
    \node[below = 1cm of r3,scale=3.5](r5){glazing};
    \node[left = 1cm of r3,scale=3.5](r6){connection};
    \node[below = 0.5cm of r6,scale=3.5](r7){};
    \node[right = 1cm of r1,scale=3.5](r10){phase};
    \node[below = 1cm of r10,scale=3.5](r11){maintenance};
    \node[below = 0.5cm of r4,scale=4](r12){\text{(a) An example of the extracted BN.}};
    
    \node[right = 20cm of r10,scale=3.5](a1){environment};
    \node[left = 1cm of a1,scale=3.5](a2){corrosion};
    \node[left = 1cm of a2,scale=3.5](a3){loose fix};
    \node[below = 1cm of a2,scale=3.5](a4){oil leak};
    \node[below = 1cm of a3,scale=3.5](a5){glazing};
    \node[left = 1cm of a3,scale=3.5](a6){connection};
    \node[below = 1cm of a6,scale=3.5](a7){};
    \node[right = 1cm of a1,scale=3.5](a10){phase};
    \node[below = 1cm of a10,scale=3.5](a11){maintenance};
   \node[below = 0.5cm of a4,scale=4](a12){\text{(b) The associated GN of the BN in (a).}};
    \path[->,draw]
    (r1)edge[bend right = 15] node{}(r3)
    (r2)edge node{}(r3)
    (r2)edge node{}(r4)
    (r3)edge node{}(r4)
    (r3)edge node{}(r5)
    (r3)edge node{}(r6)
    (r6) edge[bend right = 15] node{}(r11)
    (r1)edge node{}(r10)
    (r10)edge node{}(r11)
    
    (a1)edge[bend right = 15] node{}(a3)
    (a2)edge node{}(a3)
    (a2)edge node{}(a4)
    (a3)edge node{}(a4)
    (a3)edge node{}(a5)
    (a3)edge node{}(a6)
    (a6) edge[bend right = 15] node{}(a11)
    (a10)edge node{}(a11)
    
    }}
    \caption{The extracted BN and the GN for a bushing system.}
    \label{fig:global net}
\end{figure}
\subsection{The mapping of a document on to the GN} 
Given a document $\bm{s}\in D$, we can extract a cluster of ordered core events using the aforementioned method. These can then be identified on the GN through finding the associated vertices $V\subset V^*$ which lie on the subgraph $G=(V,E)$, where $E\subset E^*$.

Let $\psi:(\bm{u},\pi_{\bm{u}})\mapsto G$ be the function that matches $(\bm{u},\pi_{\bm{u}})$ to a subgraph $G$ and $L:(\bm{s},\Omega_{NL})\mapsto G$ be the function mapping a document on to a cluster on the GN, where $L\subseteq D\times GN$, so that $L = \psi\circ\sigma$. 
\begin{proposition}\label{map1}
When there are no core event variables that are unobservable, the mapping of a document on to the GN as defined above is unique and well-defined.
\end{proposition}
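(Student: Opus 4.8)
The plan is to exploit the factorisation $L=\psi\circ\sigma$ together with the elementary fact that a composition of well-defined maps (each assigning a unique image to every admissible input) is itself well-defined with a unique image; so the proof reduces to checking the two factors separately. First, $\sigma$ is already known to be well-defined: it is the composition $\phi\circ\iota\circ\gamma$ applied to $(\alpha(\bm{s},\Omega_r),\beta(\bm{s},\Omega_{s}))$ together with $\mu(\bm{s})$, and each of $\alpha,\beta,\gamma,\iota,\mu,\phi$ returns a single output on its input -- the only place a choice could arise is inside $\phi$, where a temporal partial order $\pi'$ conflicting with $\pi_{\bm{K}}$ is \emph{deterministically} discarded. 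Hence every document $\bm{s}$ together with the fixed parameter set $\Omega_{NL}$ is sent to exactly one pair $(\bm{u},\pi_{\bm{u}})\in\bm{U}_D\times\bm{\Pi}_D$. It therefore remains to show that $\psi:(\bm{u},\pi_{\bm{u}})\mapsto G$ is a genuine function, i.e. that the pair determines a unique subgraph $G=(V,E)$ of $G^{*}=(V^{*},E^{*})$.

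For the vertex set, recall that the core event variables $\bm{U}=\{U_1,\dots,U_{n_U}\}$ are fixed once and for all by clustering the universe of possible core events, so there is a fixed map $q$ sending each core event to the unique core event variable of which it is a state; equivalently, each vertex of $V^{*}$ corresponds to exactly one such variable. Under the hypothesis that \emph{no core event variable is unobservable}, every core event that $\sigma$ can extract from a document is the state of an observable variable, so $q$ is defined on all of $\bm{u}$ and is single-valued there. Consequently $V:=\{q(u_1),\dots,q(u_{n_u})\}\subseteq V^{*}$ is uniquely determined by $\bm{u}$.

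For the edge set, take $E$ to be the edge set of the sub-DAG of $G^{*}$ induced on $V$; this is unambiguous once $V$ is fixed, but one must check that it is compatible with $\pi_{\bm{u}}$, namely that $\pi_{\bm{u}}$ never contradicts the ancestral order $E$ inherits from $G^{*}$. This holds because $E^{*}$ was constructed to contain only genuine causal edges respecting the cause-before-effect constraint (the forced-edge and forbidden-edge lists used in the structure-learning step enforce exactly this), while $\pi_{\bm{u}}$ is the causal/temporal order output by $\sigma$ with every purely temporal relation that conflicts with the causal order already removed inside $\phi$; thus $\pi_{\bm{u}}$ refines, and does not conflict with, the restriction of the order of $G^{*}$ to $V$. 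Hence $G=(V,E)$ is a well-defined subgraph, $\psi$ is a function, and $L=\psi\circ\sigma$ is well-defined with a unique output on every $(\bm{s},\Omega_{NL})$.

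The main obstacle is the vertex-identification step and making precise why the observability hypothesis is needed. If some $U_j$ were latent, two things break: a document -- which records only observable events -- could not determine whether the corresponding vertex belongs in $V$, so $G$ would cease to be a function of $\bm{s}$ alone; and $q$ could fail to be single-valued on partially observed events, so even the vertices that do appear need not be pinned down. The argument must therefore isolate the implication ``observability of all core event variables $\Rightarrow$ $q$ is total and single-valued on $\bm{u}$'', and then let uniqueness of $V$ propagate through the purely set-theoretic definitions of $E$ and $G$. A secondary point worth an explicit line is the consistency check between $\pi_{\bm{u}}$ and the induced order on $G^{*}$: one should confirm that the constraints imposed when learning $G^{*}$ guarantee every $\pi_{\bm{u}}$ produced by $\sigma$ is realisable as a sub-order of $G^{*}$, so that no document is ever mapped to an impossible subgraph.
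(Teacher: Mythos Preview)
The paper does not actually contain its own proof of this proposition: it says only that ``the proof of this proposition is shown in the supplementary material''. All the paper gives us in the main text is the decomposition $L=\psi\circ\sigma$ and the earlier remark that ``it is straightforward to check \dots\ that the $\sigma$ function is well defined''. Your proposal follows exactly this scaffolding --- reduce to the two factors, cite the well-definedness of $\sigma$, and then argue that $\psi$ is a function --- so structurally you are doing precisely what the paper's setup invites, and in that sense your approach is the same as the one the paper signals.

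One point to tighten is your explanation of why the observability hypothesis is needed. Your argument that a latent $U_j$ would make $q$ fail to be ``total and single-valued on $\bm{u}$'' is not quite right: the core events in $\bm{u}$ are, by construction, things actually extracted from the document text, so $q$ is always defined on them regardless of whether other variables are latent. The genuine role of the hypothesis is closer to your second clause and to what the paper hints at in Section~\ref{sec:missing}: if some $U_j$ is unobservable, then the ordered set $(\bm{u},\pi_{\bm u})$ may be compatible with more than one subgraph of $G^{*}$ (because an unobserved intermediate vertex could or could not be interposed), so $\psi$ would fail to be single-valued. Recasting your observability paragraph in those terms would make the argument match the paper's later discussion of Type~2 missingness and the need for the M-CEG. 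A second minor point: the paper never says explicitly that $E$ is the induced edge set, so you should flag that this is the natural reading you are adopting rather than presenting it as given.
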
 
The proof of this proposition is shown in the supplementary material \citep{Yusupp2021}.

\subsection{The GN-CEG causal model}\label{sec:hca}

Though causal semantics represented on the GN embed the ``shallow'' causal relations, some more complex relations between events raised in reliability cannot be well-modelled by a GN alone. For example, the asymmetry of the relationships of the variables $\bm{U}$ defined above cannot be embodied simply through fitting a BN. When an ``AND'' gate exists between two variables, the routine model selection of BNs alone cannot automatically extract such relationship unless we create a new variable and add it to the vertex set instead. However, these problems and other more subtle problems can be simply addressed by introducing a CEG at a deeper layer. We then design algorithms to map the GN on to a CEG so that we can learn the more general and concise causal patterns on the CEG. 

We therefore propose a two-layer hierarchical model, with a GN lying at the surface layer and a causal CEG lying at the deeper layer. We call this a GN-CEG causal model. Our first step is to embed the shallow causal dependency on to the GN using the function $L$ as demonstrated in the previous section. This gives us a subgraph $G$ of the GN. We then define a map to project $G$ on to a path $\lambda\in\Lambda(C)$ in the CEG. This projection can be well-defined since for each position along $\lambda$, represented as $w\in \lambda$, we can define a variable over its floret $F(w)$, which can be treated as a latent variable of a sequence of observations, \textit{i.e.} the core events whose corresponding vertices lie in $G$. We explain this in detail in the next section.

 Within these semantics this setup makes sense in that the text label of each edge on the CEG describes a single event which is triggered when a sequence of core events have been observed. We call such an event a \textit{d-event}. Let $X$ denote the set of all d-events labelled on $C$. For each d-event $x\in X$, we use $e(x)\subseteq E_C$ to represent the set of edges such that once $x$ is observed, the transition along any edge $e\in e(x)$ can happen. Similarly, we use $w(x)\subseteq V_C$ to represent the set of receiving vertices of edges in $e(x)$. Every d-event $x$ can also be interpreted by a set of core events that are represented on the GN.
Therefore, to each d-event $x$, we can identify a subgraph on the GN containing this set of core event variables associated with $e(x)$ on the CEG.

The construct of a d-event forms the basis for clustering the core event variables and matching each cluster to a latent position. However, usually we may need to design algorithms to automatically learn these clusters and their assignments to the latent variables by estimating the associated conditional probabilities. Since the focus of this paper is on structural modelling we will assume that parameter estimation has already happened. For example to estimate the transition probabilities $\bm{\theta}_w$ for every $ w\in V_{C}$ given observations from a Bayesian analysis, we could assign Dirichlet priors to these probabilities. Let $\Omega_G$ denote the posterior expectation of such parameters. Then given $G$ and $\Omega_G$ we can learn the latent position for every core event, and a latent path for $G$. We represent this procedure by a map $Q:(G,\Omega_G)\mapsto\lambda$. Through projecting core events at the causal CEG, deeper causal dependency can be identified. This is encoded within the more refined path based events represented on the CEG. Let $\Omega = \{\Omega_{NL},\Omega_G\}$, the function to map a document to a latent path is then $J=Q\circ L$.

\section{Nested causality}\label{sec:rc}
 The GN-CEG causal model is analogous to the \textit{recursive Bayes Net} (RBN) \citep{Williamson2005} that models nested causal relationships. So here we define the terminologies for our model in light of the RBN. 

The RBN is a BN over $N$ variables $V=\{V_1,...,V_N\}$ where some variables can take BNs as values. If $V_i\in V$ is a variable whose values index a set of BNs, denoted by $G(V_i)$, then this is a \textit{network variable}. The RBN contains at least one such variable. If a variable corresponds to a node in $G(V_i)$, then it is said to be a \textit{direct inferior} of $V_i$ and $V_i$ the \textit{direct superior} of it. 

 
 For $V_i\in V$, let $NID(V_i)$ denote the non-inferiors or descendents of $V_i$ and
  let $Dsup(V_i)$ denote the direct superiors of $V_i$. Within an RBN, the following conditional independence is assumed to be true.
 
 \begin{assumption}[Recursive Causal Markov Condition (RCMC)\citep{Williamson2005,Casini2011}]\label{theorem:rcmc} For an RBN, every variable $V_i\in V$ is independent of those variables that are neither its descendants nor its inferiors conditional on its parents and its direct superiors. So we have
  $$V_i\indep NID(V_i)|pa(V_i)\cup Dsup(V_i).$$

\end{assumption} 


We next define analogous concepts within our model. 
\subsection{Communities}\label{collocation} 
We can always define a measurable variable over each floret on a simple CEG \citep{Wilkerson2020}. 
Thus for $w\in V_C$, we define a \textit{floret variable} $Y(w)$ for each $F(w)$. Let $e_{w,w'}\in E(w)$ be an emanating edge of $w$. Let $e_{w,w'}\in\lambda$ represent that the edge $e_{w,w'}$ is along the path $\lambda$. Then given a path $\lambda\in\Lambda(C)$,
 \begin{equation}
     Y(w)=\begin{cases}
     &y_{w,w'},
\text{\hspace{4mm}if $e_{w,w'}\in \lambda$,}\\
&0, \text{\hspace{12mm}if $e_{w,w'}\notin \lambda$.}\end{cases}
 \end{equation} 
 
 For each $w\in V_C$, we can also define an \textit{incident variable} $I(w)$ to indicate whether a path $\lambda\in\Lambda(C)$ passes through $w$ \citep{Wilkerson2020} so that
\begin{equation}
    I(w)=\begin{cases}
    &1, \text{\hspace{4mm} if $w\in\lambda$,}\\
    &0, \text{\hspace{4mm} if $w\notin\lambda$.}
    \end{cases}
\end{equation}

As mentioned in the previous section, the innovation of this paper is to map a document to a latent path on the CEG through the GN-CEG model. So for each document, a sequence of core events $\bm{u}$ can be extracted, which can then be clustered into a sequence of subsets $\bm{u}_1,...\bm{u}_d$ with respect to the order given by the GN so that each subset of core events trigger a position on the CEG to be passed through, or equivalently an edge to be passed along. Here $\bm{u}_i\cap\bm{u}_j=\emptyset$ for $i,j\in\{1,...,d\}$. The sequence of the triggered edges $\{e_1,...,e_d\}$ lie on the same root-to-sink path. This motivates us to find a generic approach to cluster the core events given their causal order so that each cluster associates to a latent edge on the CEG. Though in this paper the causal order is registered on a GN, it is clear that this idea allows extension to trees or CEGs implicitly where core events can also be represented and their causal order can be embedded. 
Following this idea, when an edge is triggered, equivalently, the associated floret variable takes a particular value, a set of core events are supposed to be observed. So a set of core event variables correspond to a value of a floret variable, which is then analogously a network variable in light of the  RBN \citep{Casini2011,Williamson2005}. 

Each floret variable is the latent variable of a set of core event variables. Here we call this set of core event variables a \textit{community}. 
Let $\bm{U}_i\subseteq \bm{U}$ denote the community associated with $Y(w_i)$, which consists of core event variables interpreting the d-events represented on $F(w_i)$. The subgraph $G_i\subseteq G^*$ is defined for $\bm{U}_i$, where $G_i=(E_i,V_i)$. The vertex set $V_i$ includes the vertices corresponding to the core event variables in $\bm{U}_i$. The edge set $E_i$ is constructed such that for $w\in V_i$, if $e_{w',w}\in E^*$ where $w'\in V^*$, then $e_{w',w}\in E_i$. The value taken by $Y(w_i)$, say $y_{w_i,w_k}$, determines a sub-community $\bm{U}_{i,k}\subseteq \bm{U}_i$ and a BN over it, denoted by $G_{i,k}\subseteq G_i$. Under such setting, the GN-CEG model is a nested causal network analogous to a two-level RBN.

  
 If there exists a total order over the communities which is consistent with the partial order of the core events that has been extracted in the pre-processing step, then we can design an artificial timeline for the communities. When two communities $\bm{U}_i$ and $\bm{U}_j$, $i<j$, are ordered at the same position by the total order, then they are collocated along the timeline. But this does not mean that the events included in these two communities happen simultaneously since we have a registered partial order over them.

 If $\bm{U}_i$ and $\bm{U}_j$ overlap and $\bm{U}_{ij}=\bm{U}_i\cap \bm{U}_j$, then the florets $F(w_i)$ and $F(w_j)$ usually represent similar information, such as symptoms. In this case, we avoid aligning such florets on the same path so that there are no recurrent root causes, symptoms or defects represented along any failure or deteriorating path. Thus we make the following assumption. 
  
    \begin{assumption}\label{assumption: order}
   If two communities $\bm{U}_i$ and $\bm{U}_j$ overlap, then $\bm{U}_i$ and $\bm{U}_j$ are collocated along the timeline. Let $\bm{U}_{i\bar{j}} = \bm{U}_i/\bm{U}_{ij}$ and $\bm{U}_{\bar{i}{j}} = \bm{U}_j/\bm{U}_{ij}$. Then for any $U'\in\bm{U}_{i\bar{j}}$ and $U''\in\bm{U}_{\bar{i}{j}}$, represented on vertices $v'$ and $v''$ respectively, there is no edge connecting them: $e_{v',v''}\notin E^*$.
  \end{assumption}

 
 
 
 
 \begin{example}
 Given an example of a GN-CEG model with a BN in Figure \ref{fig:RCN}(a) and a CEG in Figure \ref{fig:RCN}(b), we have communities $\bm{U}_0=\{U_1,U_2,U_3,U_4\}$, $\bm{U}_1 = \{U_5,U_6\}$ and $\bm{U}_2=\{U_6,U_7\}$. Let $\bm{U}_{0,1}$ denote the sub-community corresponding to $Y(w_0)=y_{w_0,w_1}$ and suppose $\bm{U}_{0,1}=\{U_1,U_3,U_4\}\subseteq \bm{U}_0$. The causal relations between variables in $\bm{U}_{0,1}$ have been determined in the GN, so the subgraph $G_{0,1}$ associated with $y_{w_0,w_1}$ can be plotted as Figure \ref{fig:RCN}(c). 
 
 Suppose that this CEG portrays the following d-events: $x_{0,1}, x_{0,2}, x_{1,1}, x_{1,2}, x_{1,3}$. Let $w(x_{0,1})=w_1$, $w(x_{0,2})=w_2$, $w(x_{1,1})=w(x_{1,3})=w_{\infty}^f$, $w(x_{1,2})=w_{\infty}^n$. Then the floret $F(w_0)$ contains information about $x_{0,1}$ and $x_{0,2}$, the floret $F(w_1)$ contains information about $x_{1,1}$ and $x_{1,2}$, and the floret $F(w_2)$ contains information about $x_{1,2}$ and $x_{1,3}$. So we can also identify a subgraph on the GN for each d-event. For example, the subgraph associated with $x_{0,1}$ is the graph in (c).
 \end{example}

To complete this section, we first demonstrate some properties of the GN-CEG model before proceeding to develop an intervention calculus for it. Then we explain how to identify the effects when forcing a core event to happen.

 \begin{figure}
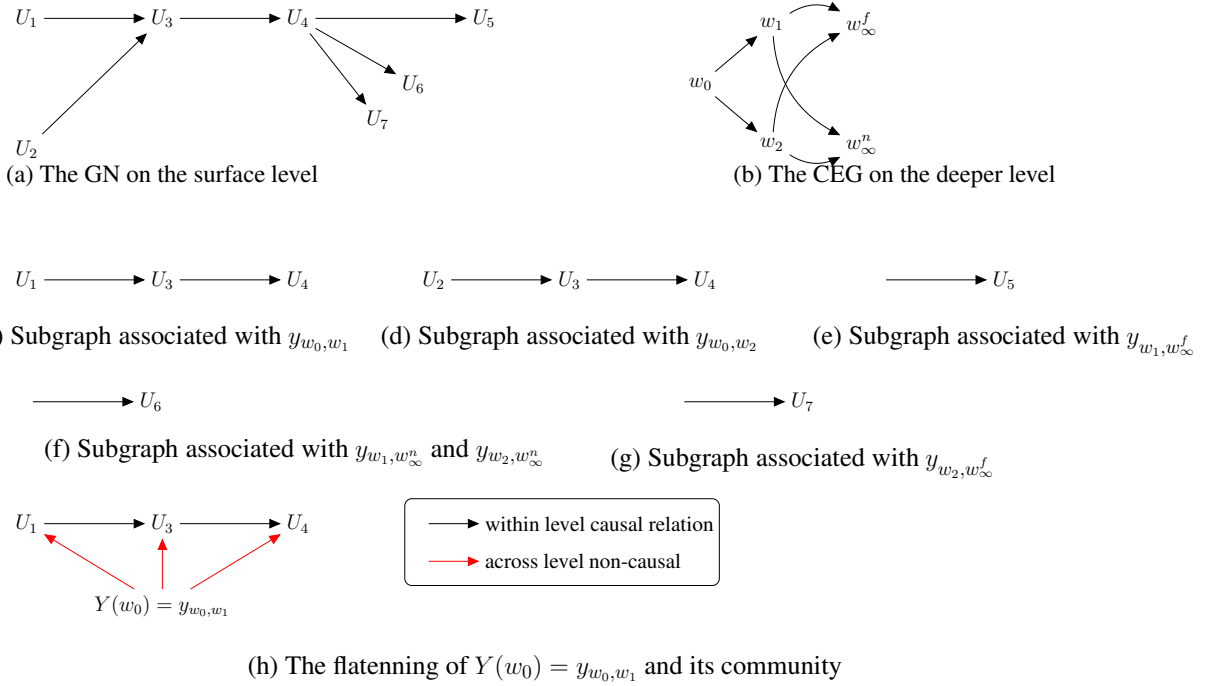

    \centering
     \resizebox{\linewidth}{!}{
    \tikz{
\node[font=\fontsize{12}{0}\selectfont](u1){$U_1$};
\node[below = 2cm of u1,font=\fontsize{12}{0}\selectfont](u2){$U_2$};
\node[right = 2cm of u1,font=\fontsize{12}{0}\selectfont](u3){$U_3$};
\node[right = 2cm of u3,font=\fontsize{12}{0}\selectfont](u4){$U_4$};
\node[right = 3cm of u4,font=\fontsize{12}{0}\selectfont](u5){$U_5$};
\node[below left = 1cm of u5,font=\fontsize{12}{0}\selectfont](u6){$U_6$};
\node[below left = 2cm of u5,font=\fontsize{12}{0}\selectfont](u7){$U_7$};
\node[below = 2.5cm of u3,font=\fontsize{14}{0}\selectfont](t1){\text{(a) The GN on the surface level}};
\node[right = 8cm of t1,font=\fontsize{14}{0}\selectfont](t2){\text{(b) The CEG on the deeper level}};
\node[right = 5cm of u6,font=\fontsize{12}{0}\selectfont](w1){$w_0$};
\node[above right = 1cm of w1,font=\fontsize{12}{0}\selectfont](w2){$w_1$};
    \node[below right = 1cm of w1,font=\fontsize{12}{0}\selectfont](w3){$w_2$};
    \node[right = 1cm of w2,font=\fontsize{12}{0}\selectfont](winf){$w_{\infty}^f$};
    \node[right = 1cm of w3,font=\fontsize{12}{0}\selectfont](winf2){$w_{\infty}^n$};
 
 \node[below = 2cm of u2,font=\fontsize{12}{0}\selectfont](u11){$U_1$};
 \node[right = 2cm of u11,font=\fontsize{12}{0}\selectfont](u12){$U_3$};
 \node[right = 2cm of u12,font=\fontsize{12}{0}\selectfont](u13){$U_4$};
 \node[below = 0.5cm of u12,font=\fontsize{15}{0}\selectfont](t3){\text{(c) Subgraph associated with $y_{w_0,w_1}$}};
 
 \node[right = 2cm of u13,font=\fontsize{12}{0}\selectfont](u21){$U_2$};
 \node[right = 2cm of u21,font=\fontsize{12}{0}\selectfont](u22){$U_3$};
 \node[right = 2cm of u22,font=\fontsize{12}{0}\selectfont](u23){$U_4$};
 \node[below = 0.5cm of u22,font=\fontsize{15}{0}\selectfont](t4){\text{(d) Subgraph associated with $y_{w_0,w_2}$}};
 
 \node[right = 3cm of u23](emp1){};
 \node[right = 2cm of emp1,font=\fontsize{12}{0}\selectfont](u31){$U_5$};
 
 \node[below = 2cm of u11](emp2){};
 \node[right = 2cm of emp2,font=\fontsize{12}{0}\selectfont](u32){$U_6$};
 \node[right = 10cm of u32](emp3){};
 \node[right = 2cm of emp3,font=\fontsize{12}{0}\selectfont](u33){$U_7$};
 \node[below = 0.5cm of u31,font=\fontsize{15}{0}\selectfont](t5){\text{(e) Subgraph associated with $y_{w_1,w_{\infty}^f}$}};
 \node[below right= 0.5cm and 0.1cm of emp2,font=\fontsize{15}{0}\selectfont](t6){\text{(f) Subgraph associated with $y_{w_1,w_{\infty}^n}$ and $y_{w_2,w_{\infty}^n}$}};
 \node[below = 0.5cm of u33,font=\fontsize{15}{0}\selectfont](t7){\text{(g) Subgraph associated with $y_{w_2,w_{\infty}^f}$}};
 
 \node[below = 2cm of emp2,font=\fontsize{12}{0}\selectfont](u41){$U_1$};
 \node[right = 2cm of u41,font=\fontsize{12}{0}\selectfont](u43){$U_3$};
 \node[right = 2cm of u43,font=\fontsize{12}{0}\selectfont](u44){$U_4$};
 \node[below = 1cm of u43,font=\fontsize{12}{0}\selectfont](y1){$Y(w_0)=y_{w_0,w_1}$};
   
\node[right = 2cm of u44](legend1){};
\node[right = 1cm of legend1,font=\fontsize{12}{0}\selectfont](legend2){\text{within level causal relation}};
\node[below = 0.5cm of legend1](legend3){};
\node[right = 1cm of legend3,font=\fontsize{12}{0}\selectfont](legend4){\text{across level non-causal}};
\node[below right = 0.5cm and 0.1cm of y1,font=\fontsize{15}{0}\selectfont](t8){\text{(h) The flatenning of $Y(w_0)=y_{w_0,w_1}$ and its community}};
    \path[->,draw]
    (w2)edge[bend left = 30](winf)
    (w2)edge[bend right = 30](winf2)
    (w3)edge[bend left = 30](winf)
    (w3)edge[bend right = 30](winf2)
    (w1)edge(w2)
    (w1)edge(w3)
    (u1)edge(u3)
    (u2)edge(u3)
    (u3)edge(u4)
    (u4)edge(u5)
    (u4)edge(u6)
    (u4)edge(u7)
    
    (u11)edge(u12)
    (u12)edge(u13)
    (u21)edge(u22)
    (u22)edge(u23)
    (emp1)edge(u31)
    (emp2)edge(u32)
    (emp3)edge(u33)
    (y1)edge[color=red](u41)
    (y1)edge[color=red](u43)
    (y1)edge[color=red](u44)
    (u41)edge(u43)
    (u43)edge(u44)

    (legend1)edge(legend2)
    (legend3)edge[color=red](legend4);

 \plate[inner sep=0.1cm,xshift=-0.12cm,yshift=0.12cm]{plate1}{(legend1)(legend2)(legend3)(legend4)}{};
    } }
    \caption{An example of the GN-CEG model whose surface BN is shown in (a) and the bottom CEG is shown in (b). The values of the floret variables and their corresponding subgraphs are depicted in (c)-(g).}
    \label{fig:RCN}
\end{figure}

\subsection{Properties of the GN-CEG}\label{properties of the gn-ceg}
 Given a core event variable $U_i$, we can identify the direct superior of $U_i$ on the CEG by the following steps. We have defined d-events for each $U_i$, so when observing a value of $U_i$ the corresponding d-events are observed. These d-events are labelled on the set of edges $E_{U_i}$ on the CEG. The set of receiving vertices of edges $e\in E_{U_i}$ can be treated as the latent states of $U_i$ on the CEG, denoted by $W_{U_i}$. Let  
\begin{equation}
    W_{i}=\bigcup_{w\in W_{U_i}}pa(w).
\end{equation} The set of florets whose root vertex is a $w\in W_{i}$ is denoted by $F(W_{i})$. Then the latent variables of $U_i$ can be represented by $Y(W_{i})=(Y(w))_{w\in W_{i}}$. 

Next we show the assumption of RCMC is valid within the GN-CEG model. We begin with proving the validity of the assumptions of the causal Markov condition (CMC) and the recursive Markov condition (RMC) \citep{Casini2011,Williamson2005}.

Smith and Anderson \citep{Smith2008} showed different conditional independence properties can be read from the CEG. This was recently generalised in \citep{Wilkerson2020}. Let $\tilde{V}=V_C/\{w_{\infty}^f,w_{\infty}^n\}$ denote the non-terminal nodes and $H$ denote an assignment of values to the variables defined over the CEG,
\begin{equation}
    H=\{y(w)\}_{w\in \tilde{V}}\cup\{i(w)\}_{w\in V_C}.
\end{equation}  

Let $Y_{nd}(w)$ denote the floret variables defined over the florets that do not lie downstream of $w\in V_C$. Given $H$, for every core event variable $U_i\in \bm{U}$, let $nd^H(U_i)$ denote the non-descendents of $U_i$, $pa^H(U_i)$ denote the parent variables of $U_i$ represented on the GN, and $Dsup^H(U_i)$ denote the direct superiors of $U_i$. 

\begin{proposition}
Given an assignment $H$, the causal Markov condition is valid within the GN-CEG model, whenever any variable that appears as a vertex is independent of its non-descendants given its parents. This implies in particular that:
\begin{equation}
    Y(w)\indep Y_{nd}(w)|I(w)
\end{equation}and
\begin{equation}\label{CMC}
    U_i\indep nd^H(U_i)|pa^H(U_i),Y^H(W_{i}).
\end{equation}
\end{proposition}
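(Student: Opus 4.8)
The plan is to establish the two displayed independences separately, since the first is a statement purely about the CEG layer and the second is the genuinely ``nested'' claim that the core event variables inherit a Markov property from the GN together with their latent-path context. For the first, $Y(w)\indep Y_{nd}(w)\mid I(w)$, I would appeal directly to the conditional independence theory for CEGs of Smith and Anderson and its generalisation by Wilkerson \emph{et al.}\ \citep{Smith2008,Wilkerson2020}. Conditioning on $I(w)=0$ forces $Y(w)=0$ deterministically, so independence is trivial; conditioning on $I(w)=1$ means a path reaches $w$, and by construction the distribution of the emanating edge at $w$ depends only on the stage/position of $w$, which is already fixed once we know the path has arrived there. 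The upstream floret variables that determine \emph{how} the path reached $w$ — these are among $Y_{nd}(w)$ — are thus irrelevant to the transition out of $w$, while the downstream floret variables are not in $Y_{nd}(w)$ at all. This is precisely the ``context-specific'' factorisation that makes a staged tree, hence a CEG, a valid probability model, so the first claim is essentially a restatement of the defining property of $(C,\bm{\theta}_T)$.

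For the second claim, I would first unpack the three sets appearing in \eqref{CMC}. By the construction in Section~\ref{properties of the gn-ceg}, $Y^H(W_i)=(Y(w))_{w\in W_i}$ with $W_i=\bigcup_{w\in W_{U_i}}pa(w)$ is exactly the collection of floret variables whose realised values trigger (or fail to trigger) the d-events interpreting $U_i$; conditioning on $H$ fixes the entire latent path, and in particular pins down which community, and which \emph{sub}-community $\bm U_{i,k}\subseteq\bm U_i$, the variable $U_i$ lives in, together with the within-level BN $G_{i,k}$ over that sub-community. Once that sub-community and its BN are fixed, $U_i$ is a vertex of an ordinary Bayesian network, and the hypothesised premise of the proposition — ``whenever any variable that appears as a vertex is independent of its non-descendants given its parents'' — is just the ordinary causal Markov condition for that BN. So the argument is: (i) condition on $H$ to reduce to a fixed within-community BN; (ii) apply the ordinary CMC inside that BN to get $U_i\indep nd(U_i)\mid pa(U_i)$ \emph{within the community}; (iii) argue that the remaining non-descendants of $U_i$ in the whole GN-CEG model — those sitting in other communities, or those whose ``parent'' relationship to $U_i$ is across levels (the red, non-causal edges of Figure~\ref{fig:RCN}(h)) — are screened off by the conjunction of $pa^H(U_i)$ and $Y^H(W_i)$, using Assumption~\ref{assumption: order} (overlapping communities are collocated and not joined by GN edges) and the disjointness $\bm U_i\cap\bm U_j=\emptyset$ for non-overlapping ones to rule out direct causal influence between communities.

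The main obstacle, and the step I would spend the most care on, is (iii): precisely characterising $nd^H(U_i)$ in the combined model and showing nothing in it leaks information into $U_i$ once we have conditioned on both its GN-parents and the latent floret variables $Y^H(W_i)$. The subtlety is that the ``non-descendants'' must be read in the sense of the nested structure — a variable in another community is neither a descendant nor an inferior/superior of $U_i$ — so one has to be careful that the path-conditioning in $H$ does not accidentally open a collider somewhere along the timeline. I would handle this by exploiting the total order on communities (when it exists, per the discussion preceding Assumption~\ref{assumption: order}): the latent path is a Markov chain on positions, so $Y^H(W_i)$ d-separates the florets downstream of $U_i$'s latent states from those upstream, and within each community the only route from an outside variable to $U_i$ must pass through a floret variable that has been conditioned on. Combining the CEG-level separation (the first displayed equation) with the community-level BN separation then yields \eqref{CMC}; I would phrase this as a direct verification that the global factorisation of the GN-CEG joint density, obtained by multiplying the CEG path probabilities by the within-community BN factors, respects the independence in \eqref{CMC}, which is the cleanest way to avoid case analysis on d-connecting paths.
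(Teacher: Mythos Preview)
Your treatment of the first displayed independence matches the paper exactly: both appeal to the CEG separation results of \citep{Smith2008,Wilkerson2020}, the paper adding only the remark that floret and incident variables sit at the lowest level of the hierarchy and so have no direct superiors, which reduces the nested CMC for them to the ordinary CEG Markov property.

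For the second claim your route diverges from the paper's. The paper's argument is a two-line appeal to the RBN scaffolding: it observes that $Dsup^H(U_i)=Y^H(W_i)$ by construction, and that the \emph{value} of this direct superior determines the community $\bm{U}'$ (hence the sub-BN) in which $U_i$ lives; equation~\eqref{CMC} is then read off as the instance of the premise ``any variable is independent of its non-descendants given its parents'' applied inside that determined sub-BN, with the conditioning on $Y^H(W_i)$ playing the role of fixing which sub-BN we are in. Your plan instead breaks into a within-community step (ii) and a cross-community screening step (iii), invoking Assumption~\ref{assumption: order} and a d-separation analysis along the timeline to rule out leakage from other communities. That is not wrong, but it is heavier than what the paper does: the paper never invokes Assumption~\ref{assumption: order} here, and it does not carry out any explicit d-separation across communities, because the single observation ``the direct superior's value determines the community'' is taken as already collapsing the nested model to an ordinary BN in which the hypothesis of the proposition applies directly. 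Your approach buys a more explicit accounting of where every non-descendant sits and why it is blocked; the paper's buys brevity by leaning entirely on the network-variable semantics of the RBN framework.
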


\begin{proof}
 Neither the floret variables nor the incident variables have direct superiors since they lie at the lowest level of the model. So we have the desired property \citep{Smith2008,Wilkerson2020}:
\begin{equation}
    Y(w)\indep Y_{nd}(w)|I(w).
\end{equation}We have shown that the latent variables of $U_i$ is $Y^H(W_{i})$, so
\begin{equation}
   Dsup^H(U_i) =Y^H(W_{i}).
\end{equation}
Let $\bm{U}'$ represent the communities associated to $Dsup^H(U_i)$ where $U_i\in \bm{U}'$. As mentioned in the beginning of this section, the value of $Dsup^H(U_i)$ determines $\bm{U}'$. Therefore we have
\begin{equation}\label{CMC}
    U_i\indep nd^H(U_i)|pa^H(U_i),Y^H(W_{i}).
\end{equation}
\end{proof}

Recall that a \textit{flattening} \citep{Casini2011,Williamson2005} is a non-recursive BN derived from an RBN which has the same domain as the RBN and contains arrows that do not correspond to causal relations. Here we can also construct a flattening for our nested causal network with respect to $H$, denoted by $H^{\downarrow}$. The vertex set of $H^{\downarrow}$ is the same as the vertex set of the nested causal network. There is an arrow from $V_i$ to $V_j$ in $H^{\downarrow}$ if $V_i$ is the parent or the direct superior of $V_j$ in the nested causal network. Let $pa^{H^{\downarrow}}(U_i)$ denote the set of parent variables of $U_i$ in $H^{\downarrow}$. Then 
\begin{equation}
    pa^{H^{\downarrow}}(U_i)=\{pa^H(U_i),Dsup^H(U_i)\}.
\end{equation} The dependence queries are not changed by ``flattening'' the nested causal network \citep{Williamson2005}. Therefore for every $U_i\in\bm{U}$,
\begin{equation}
    p(u_i|pa^{H^{\downarrow}}(u_i))=p(u_i|pa^H(u_i),y^H(W_{i})).
\end{equation}

Casini \textit{et.al} \citep{Casini2011} stated the recursive Markov condition for the RBN. This assumes that each variable is independent of the variables which are non-inferiors or peers given its direct superiors. To mirror this assumption on the GN-CEG model, let $Y(\overline{W_{i}})$ denote the set of floret variables defined over $\overline{W_{i}}=V_C/\{W_{i}\cup w_{\infty}^f\cup w_\infty^n\}$, then we have
\begin{equation}\label{RMC}
    U_i\indep (Y^H(\overline{W_{i}}),I^H(V_C))|Y^H(W_{i}),nd^H(U_i).
\end{equation}
\begin{theorem}
Given the CMC and the RMC to be true within the GN-CEG model, the Recursive Causal Markov Condition (RCMC) is valid:
\begin{equation}\label{RCMC}
    U_i\indep (nd^H(U_i),Y^H(\overline{W_{i}}),I^H(V_C))|pa^H(U_i),Y^H(W_{i}).
\end{equation}
\end{theorem}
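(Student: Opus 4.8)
The plan is to derive \eqref{RCMC} by combining the two conditional-independence statements we are given as hypotheses, namely the CMC in \eqref{CMC} and the RMC in \eqref{RMC}, using the standard semi-graphoid axioms (decomposition, weak union, contraction) applied to the joint distribution over the variables of the flattening $H^{\downarrow}$. The key observation is that the three blocks of variables we want on the left of \eqref{RCMC} — the non-descendants $nd^H(U_i)$, the ``off-stream'' floret variables $Y^H(\overline{W_{i}})$, and the incident variables $I^H(V_C)$ — are already split, in a complementary way, between the CMC (which handles $nd^H(U_i)$ given $pa^H(U_i)$ and $Y^H(W_{i})$) and the RMC (which handles $Y^H(\overline{W_{i}})$ and $I^H(V_C)$ given $Y^H(W_{i})$ and $nd^H(U_i)$). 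So the task is essentially a bookkeeping merge of two independences sharing the common conditioning variable $Y^H(W_{i})$.

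First I would restate the two hypotheses in the convenient form $U_i \indep A \mid C$ and $U_i \indep B \mid (C, A)$, where $A = nd^H(U_i)$, $B = (Y^H(\overline{W_{i}}), I^H(V_C))$, and $C = (pa^H(U_i), Y^H(W_{i}))$; note that the RMC as written conditions on $Y^H(W_{i}), nd^H(U_i)$, and since $pa^H(U_i)$ is a (deterministic function of / subset of) the information already carried along the upstream florets and the GN structure, one may enlarge the conditioning set of the RMC to include $pa^H(U_i)$ without loss — this is the one place where I would want to be careful and would invoke the flattening construction $pa^{H^{\downarrow}}(U_i) = \{pa^H(U_i), Dsup^H(U_i)\}$ together with $Dsup^H(U_i) = Y^H(W_{i})$ to justify that the parent variables add nothing beyond what $Y^H(W_{i})$ and $nd^H(U_i)$ already determine. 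Then contraction gives $U_i \indep (A, B) \mid C$ directly, which is exactly \eqref{RCMC} once $(A,B)$ is unpacked as $(nd^H(U_i), Y^H(\overline{W_{i}}), I^H(V_C))$ and $C$ as $(pa^H(U_i), Y^H(W_{i}))$.

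The main obstacle I anticipate is not the graphoid manipulation itself, which is routine, but verifying that the variable partition is exhaustive and non-overlapping: one must check that every vertex-variable of the flattening other than $U_i$ and its conditioning set lands in exactly one of $nd^H(U_i)$, $Y^H(\overline{W_{i}})$, or $I^H(V_C)$, and in particular that the descendants and inferiors of $U_i$ — which must be excluded from the left-hand side — are not accidentally swept in through $Y^H(\overline{W_{i}})$. This requires using the way $W_{i} = \bigcup_{w \in W_{U_i}} pa(w)$ was defined from the latent states $W_{U_i}$ of $U_i$, so that florets downstream of $U_i$'s latent positions are precisely the ones through which $U_i$ exerts causal influence and hence are correctly placed on the conditioning side or excluded, while $\overline{W_{i}}$ collects exactly the off-stream florets. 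I would close the argument by remarking that since ``flattening'' does not alter any dependence query \citep{Williamson2005}, the independence established in $H^{\downarrow}$ transfers back verbatim to the GN-CEG model, giving \eqref{RCMC}.
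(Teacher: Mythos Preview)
Your approach is the same as the paper's: apply the contraction axiom to \eqref{CMC} and \eqref{RMC} to obtain \eqref{RCMC}. The paper's proof consists of exactly that one sentence.

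However, you manufacture a difficulty that is not there. You worry that the RMC conditions on $Y^H(W_{i}),\, nd^H(U_i)$ while contraction would need conditioning on $(C,A)=(pa^H(U_i),\,Y^H(W_{i}),\,nd^H(U_i))$, and you propose to ``enlarge'' the RMC's conditioning set by appealing to the flattening. This is unnecessary: parents are non-descendants, so $pa^H(U_i)\subseteq nd^H(U_i)$ and hence $(Y^H(W_{i}),\,nd^H(U_i))=(pa^H(U_i),\,Y^H(W_{i}),\,nd^H(U_i))$ already. Contraction then applies verbatim with $A=nd^H(U_i)$, $B=(Y^H(\overline{W_{i}}),I^H(V_C))$, $C=(pa^H(U_i),Y^H(W_{i}))$. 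Your second anticipated obstacle (checking that the variable partition is exhaustive and non-overlapping) is likewise superfluous here, since both CMC and RMC are given as hypotheses; that bookkeeping would only matter if you were trying to \emph{derive} the RMC rather than use it.
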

\begin{proof}
When the CMC and the RMC are true, we have the conditional independence statements (\ref{CMC}) and (\ref{RMC}). By the contraction axiom of Pearl \citep{Pearl1997}, these imply (\ref{RCMC}).
\end{proof}

Within a GN, the core event variables are causally ordered. Within a CEG, the floret variables and the incident variables are causally ordered. Assume there exists a total order over the variables defined on the CEG, denoted by $\Pi$, which respects the causal ordering. 
For any assignment $H$, by assigning the values in $H$ one by one to the floret variables and the incident variables in an order consistent with $\Pi$, we can order the corresponding communities. As mentioned in Section \ref{collocation}, this order should be consistent with the partial order of the core event variables. Each value of a floret variable corresponds to a subgraph on the GN as defined above. When each pair of these non-recursive subgraphs is consistent under $H$, then $H$ is a \textit{consistent assignment} \citep{Casini2011,Williamson2005}. This concept is useful for proving the following proposition.

\begin{proposition}\label{prop:unique}
For the GN-CEG model, there is a unique map from the florets on the CEG to the core event variables on the GN.
\end{proposition}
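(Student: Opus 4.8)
The plan is to identify the map explicitly and then to show that, once the GN-CEG model $(G^{*},C,\Omega_G)$ is fixed, it carries no arbitrary choice. The natural candidate sends each floret $F(w)$ on the CEG $C$ to its community $\bm{U}_w\subseteq\bm{U}$, i.e.\ to the set of core event variables that interpret the d-events labelling the edges emanating from $w$. First I would check that this assignment is well defined: by the construction of $C$ in Section~\ref{sec:ceg} every edge $e\in E(w)$ carries a single d-event label $x$ with $e\in e(x)$, and by the construction of the GN-CEG model in Section~\ref{sec:hca} each such $x$ is interpreted by a determined subgraph of $G^{*}$, hence by a determined set of core event variables; taking the union over the finitely many $e\in E(w)$ gives one subset $\bm{U}_w$, so the candidate is a genuine function on the set of florets.

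Next I would establish uniqueness through the latent-state description of each core event variable. Fixing $U_i\in\bm{U}$, the d-event labelling of $C$ fixes the edge set $E_{U_i}$ carrying the d-events that $U_i$ interprets, hence the set $W_{U_i}$ of their receiving vertices and hence $W_i=\bigcup_{w\in W_{U_i}}pa(w)$; consequently $U_i$ lies in the community of $F(w)$ exactly when $w\in W_i$, so $\bm{U}_w=\{\,U_i\in\bm{U}\mid w\in W_i\,\}$ is forced once the underlying d-event labelling is itself forced. To see the latter, I would appeal to Proposition~\ref{map1}, which makes the document-to-subgraph map $L$ unique in the absence of unobservable core event variables, together with the deterministic map $Q:(G,\Omega_G)\mapsto\lambda$: given the fixed posterior parameters $\Omega_G$, $Q$ places each core event at a unique latent position and each subgraph at a unique latent path, so the clustering of core events into communities and their attachment to florets is the same for every realisation of the model. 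If two such maps disagreed on some floret $F(w)$, some $U_i$ would lie in exactly one of the two candidate communities, contradicting the determinacy of $W_i$ just established.

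The hard part will be the treatment of overlapping communities. When $\bm{U}_i\cap\bm{U}_j\neq\emptyset$, a shared core event variable is attached to both $F(w_i)$ and $F(w_j)$, and I need to argue that the value of the map does not depend on the order in which the communities are built along the artificial timeline. Here I would lean on Assumption~\ref{assumption: order}: overlapping communities are collocated, and the exclusive parts $\bm{U}_{i\bar j}$ and $\bm{U}_{\bar ij}$ have no connecting edge in $G^{*}$, so the shared part $\bm{U}_{ij}$ enters both communities symmetrically and the total order $\Pi$ over the CEG variables can be chosen consistently with the extracted partial order without introducing a choice. The bulk of the write-up is therefore verifying that this collocation hypothesis, combined with the consistency of the assignment $H$ and Proposition~\ref{map1}, eliminates the only remaining source of ambiguity; the remainder is bookkeeping over the finitely many florets.
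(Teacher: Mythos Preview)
Your argument has a circularity problem. You invoke the map $Q:(G,\Omega_G)\mapsto\lambda$ as a tool, writing that ``$Q$ places each core event at a unique latent position and each subgraph at a unique latent path, so the clustering of core events into communities and their attachment to florets is the same for every realisation of the model.'' But in the paper's architecture the uniqueness of $Q$ is a \emph{consequence} of Proposition~\ref{prop:unique}, not an input to it: Proposition~\ref{map2} is stated immediately afterwards and is described as ``a straightforward consequence of the unique map.'' Using $Q$'s determinacy here assumes what you are trying to prove. Similarly, Proposition~\ref{map1} concerns the surface map $L$ from documents to subgraphs of the GN; it does not speak to the floret-to-community assignment at the CEG level, so it cannot carry the weight you place on it.

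The paper's route is genuinely different and avoids this trap. It reduces the claim to two statements drawn from the recursive Bayes net literature \citep{Williamson2005,Casini2011}: (1) there exists a \emph{consistent assignment} $H$ over the GN-CEG network, and (2) the joint distribution determined by that assignment is unique. The notion of consistent assignment is introduced in the paragraph immediately preceding the proposition precisely for this purpose, and the RCMC together with the total order $\Pi$ over the CEG variables are what make the joint factorisation well defined. Your write-up touches consistency of $H$ only in passing at the very end; to repair the argument you would need to make that the backbone---show a consistent assignment exists (this is where Assumption~\ref{assumption: order} on overlapping communities actually does its work) and then argue that the resulting joint distribution over $\bm{U}$ and the floret/incident variables is unique, from which the floret-to-community correspondence is forced---rather than routing through $Q$.
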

To prove the uniqueness of the map, it is sufficient to prove the following two statements:(1) there exists a consistent assignment over the GN-CEG network; (2) the joint distribution is unique. The details of this proof is in the supplementary material \citep{Yusupp2021}.

\begin{proposition}\label{map2}
When there are no core event variables that are unobservable, the function $Q$ defined in Section \ref{sec:hca} is unique. As a result, the mapping from a document $\bm{s}\in D$ to $\lambda\in\Lambda(C)$ is unique. 
\end{proposition}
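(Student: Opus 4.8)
The plan is to reduce Proposition \ref{map2} to the results already established, chiefly Proposition \ref{map1} and Proposition \ref{prop:unique}. Recall that $J = Q\circ L$ and $L = \psi\circ\sigma$, so the mapping from a document to a latent path factors through three maps: $\sigma$ (document to ordered core events), $\psi$ (ordered core events to a subgraph $G$ of the GN), and $Q$ (the pair $(G,\Omega_G)$ to a path $\lambda\in\Lambda(C)$). By Proposition \ref{map1}, under the stated hypothesis that no core event variable is unobservable, the composite $L=\psi\circ\sigma$ is unique and well-defined, so it suffices to establish the uniqueness of $Q$; the uniqueness of the document-to-path map $J$ is then immediate by composition of unique maps.

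First I would spell out what it means for $Q:(G,\Omega_G)\mapsto\lambda$ to be \emph{unique}: given a subgraph $G$ and the posterior expectation of parameters $\Omega_G$, there is exactly one root-to-sink path $\lambda\in\Lambda(C)$ assigned to $G$. I would argue this in two steps. Step one: by Proposition \ref{prop:unique} there is a unique map from the florets on the CEG to the core event variables on the GN, equivalently a unique assignment of each community $\bm U_i$ (and each sub-community $\bm U_{i,k}$) to a floret variable $Y(w_i)$ (and to a value $y_{w_i,w_k}$ of it). This means the clustering of the core events in $\bm u$ into the ordered subsets $\bm u_1,\dots,\bm u_d$ and the identification of which edge $e_l$ each subset triggers are both determined, not chosen. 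Step two: since the sequence of triggered edges $\{e_1,\dots,e_d\}$ lies, by construction of a d-event and the ordering registered on the GN, on a single root-to-sink path of $C$, and since a root-to-sink path on a CEG is uniquely determined by the sequence of edges along it, the path $\lambda=Q(G,\Omega_G)$ is uniquely pinned down.

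The role of $\Omega_G$ deserves a sentence: $\Omega_G$ (the posterior expectations of the transition probabilities $\bm\theta_w$) is what allows the latent positions to be \emph{learned} in the first place, i.e. it makes $Q$ well-defined as a function rather than merely a relation; once it is fixed, the argument above shows the output is single-valued. I would also note that the hypothesis ``no core event variables are unobservable'' is used exactly where Proposition \ref{map1} is invoked and, implicitly, where Proposition \ref{prop:unique} guarantees a consistent assignment exists over the whole GN-CEG network — if some community variable were latent, the clustering of $\bm u$ into communities could fail to be reconstructible from the observed text alone.

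I expect the main obstacle to be not any single hard computation but making the chain of invocations airtight: in particular, verifying that the ``unique map from florets to core event variables'' of Proposition \ref{prop:unique} genuinely forces the \emph{forward} assignment $\bm u\mapsto\lambda$ to be single-valued, rather than only the backward map $\lambda\mapsto$ communities. The subtlety is that a priori several documents (hence several core-event clusters) could be compatible with the same path, which is fine, but one must rule out that a single document's extracted $(\bm u,\pi_{\bm u})$ admits two distinct consistent assignments to communities and thence two paths; this is precisely what the uniqueness of the joint distribution in Proposition \ref{prop:unique}, together with the well-definedness of $L$ from Proposition \ref{map1}, closes off. The remainder — composing unique maps to get a unique $J$, and concluding the document-to-$\lambda$ map is unique — is routine.
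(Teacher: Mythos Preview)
Your proposal is correct and follows essentially the same approach the paper indicates: the paper states that Proposition~\ref{map2} is ``a straightforward consequence of the unique map'' (i.e.\ Proposition~\ref{prop:unique}), with the detailed verification relegated to the supplement, and your argument unpacks precisely that consequence---combining Proposition~\ref{map1} for the uniqueness of $L$ with Proposition~\ref{prop:unique} for the uniqueness of the floret-to-community assignment, then composing. Your attention to the forward-versus-backward direction of the unique map is the right place to be careful, and your resolution via the uniqueness of the joint distribution in Proposition~\ref{prop:unique} is the intended one.
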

This is a straightforward consequence of the unique map. However it is tedious to confirm so we have relegated its proof to the supplementary material \citep{Yusupp2021}.
\subsection{The control of a core event}
Here we discuss an intervention regime on the GN, \textit{i.e.} forcing a single core event variable to take a specific value. The effect of this type of intervention on the GN and its corresponding factorisation is well established -- see for example Pearl \citep{Pearl2000}. We give a simple example below.

In the nested causal network shown in Figure \ref{fig:RCN}, suppose the core event $u_3$ is controlled to occur. The variable $U_3$ lies in the community $\bm{U}_0$ whose latent variable is $Y(w_0)$. The node $w_0$ has two child nodes $\{w_1,w_2\}$. The labels on $e_{w_0,w_1}$ and $e_{w_0,w_2}$ are $x_{0,1}$ and $x_{0,2}$ respectively.

When $Y(w_0)=y_{w_0,w_1}$, the d-event is $x_{0,1}$. The subgraph $G_{0,1}$ in this case is shown in Figure \ref{fig:RCN} (c). The only parent variable of $U_3$ is $U_1$. By Pearl's \textit{do}-algebra, given $do(U_3=u_3)$, the edge between $U_1$ and $U_3$ is removed. When $Y(w_0)=y_{w_0,w_2}$, the parent variable of $U_3$ is now $U_2$, see $G_{0,2}$ in Figure \ref{fig:RCN} (d). When forcing $U_3$ to take a value $u_3$, $U_2$ is no longer a parent of $U_3$. Figure \ref{fig:intervention a} depicts the topology of the post-intervention subgraphs. 
\begin{figure}
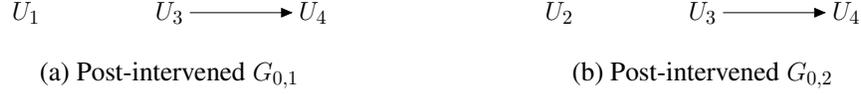

    \centering
     \resizebox{0.7\linewidth}{!}{
    \tikz{
\node[font=\fontsize{15}{0}\selectfont](u1){$U_1$};
\node[right = 2cm of u1,font=\fontsize{15}{0}\selectfont](u3){$U_3$};
\node[right = 2cm of u3,font=\fontsize{15}{0}\selectfont](u4){$U_4$};

\edge{u3}{u4};
\node[below = 0.5cm of u3,font=\fontsize{15}{0}\selectfont]{(a) Post-intervened $G_{0,1}$};
\node[right = 4cm of u4,font=\fontsize{15}{0}\selectfont](u2){$U_2$};
\node[right = 2cm of u2,font=\fontsize{15}{0}\selectfont](u23){$U_3$};
\node[right = 2cm of u23,font=\fontsize{15}{0}\selectfont](u24){$U_4$};
\edge{u23}{u24};
\node[below = 0.5cm of u23,font=\fontsize{15}{0}\selectfont]{(b) Post-intervened $G_{0,2}$};
    }
    }
    \caption{The manipulated subgraphs when intervening on $U_3=u_3$.}
    \label{fig:intervention a}
\end{figure}
The cross-level effect of this intervention on the CEG can be computed analogously to the way we calculate this effect in an RBN \citep{Williamson2005}. 
Suppose we are interested in the effect on the probability of failure, \textit{i.e.} the probability of arriving at $w^f_{\infty}$. Then this effect can be computed using the following formula:
\begin{equation} \label{control event}
        \pi(\Lambda_{w_{\infty}^f}||U_3=u_3)=\frac{\pi(\Lambda_{w_{\infty}^f},u_3)}{\pi(u_3)}.
\end{equation}This is analogous to the formula obtained using Pearl's causal algebra \citep{Pearl2000}. By combining the two values that $Y(w_0)$ can take, we have that
\begin{equation}
     \pi(\Lambda_{w_{\infty}^f},u_3)=\pi(\Lambda_{w_{\infty}^f},u_3,\Lambda_{w(x_{0,1})}) + \pi(\Lambda_{w_{\infty}^f},u_3,\Lambda_{w(x_{0,2})}).
\end{equation}
Applying the RCMC then:
\begin{equation}
    \begin{split}
      \pi(\Lambda_{w_{\infty}^f},u_3) = \pi&(\Lambda_{w_{\infty}^f}|\Lambda_{w(x_{0,1})})\pi(\Lambda_{w(x_{0,1})})\pi(u_3|\Lambda_{w(x_{0,1})})\\
        &+ \pi(\Lambda_{w_{\infty}^f}|\Lambda_{w(x_{0,2})})\pi(\Lambda_{w(x_{0,2})})\pi(u_3|\Lambda_{w(x_{0,2})})\\
       =\pi&(\Lambda(w(x_{0,1}),w_{\infty}^f)) \pi(u_3|\Lambda_{w(x_{0,1})})\\
       & + \pi(\Lambda(w(x_{0,2}),w_{\infty}^f))\pi(u_3|\Lambda_{w(x_{0,2})}).
    \end{split}
\end{equation}The dependence of $U_3$ on $U_1$ and $U_2$ is removed because of the intervention on $U_3$. We therefore have that:
\begin{equation}
   \pi(u_3) = \pi(u_3|\Lambda_{w(x_{0,1})})\pi(\Lambda_{w(x_{0,1})}) + \pi(u_3|\Lambda_{w(x_{0,2})})\pi(\Lambda_{w(x_{0,2})}).
\end{equation}
 The manipulated probability shown in equation \ref{control event} can therefore be expressed as:
\begin{equation}
     \pi(\Lambda_{w_{\infty}^f}||U_3=u_3)
    =\frac{\pi(\Lambda(w(x_{0,1}),w_{\infty}^f)) \pi(u_3|\Lambda_{w(x_{0,1})}) + \pi(\Lambda(w(x_{0,2}),w_{\infty}^f))\pi(u_3|\Lambda_{w(x_{0,2})})}{\pi(u_3|\Lambda_{w(x_{0,1})})\pi(\Lambda_{w(x_{0,1})}) + \pi(u_3|\Lambda_{w(x_{0,2})})\pi(\Lambda_{w(x_{0,2})})}.
\end{equation}

\section{Remedial interventions}\label{sec:intervention}


Having a causal CEG at the deeper layer, we now show how to compute causal effects of interventions within the GN-CEG model. In this paper we only focus on a special type of intervention called \textit{remedial intervention} which aims to prevent the same defect or failure reoccurring by exploring the root cause of a fault that has occurred and correcting it. 

One of the main applications which have to inspire standard approaches to causality are to health sciences where treatments are given and where intervention effects intend to mirror effects that might be observed within a randomised control trial \citep{Pearl2000,Rubin2003}. However, the term ``treatment'' is barely used in fault analysis where instead the concept of an intervention is one that provides a ``\textit{remedy}''. The inferential framework focuses on the discovery of a root cause of a fault and the identification of a sequence of actions that will provide a remedy to that fault. Here we develop a causal algebra where remedial maintenance takes centre stage. This new algebra is different from but analogous to Pearl's intervention calculus on BN \citep{Pearl2000}.

\subsection{Different types of remedy}\label{section:types of remedies}
 In the maintenance literature, there are three main categories of maintenance: \textit{perfect maintenance}, \textit{imperfect maintenance} and \textit{uncertain maintenance} \citep{Iung2005, Borgia2009, Cai2013}. Here, we develop this idea to explore the three types of remedy: the \textit{perfect remedy}, the \textit{imperfect remedy} and the \textit{uncertain remedy}. The classification of remedial actions is determined by the status of the defect part after its repair. The crucial factor deciding whether the status is as good as new (AGAN) \citep{Bedford2001} is whether the root cause of the defect is corrected. Note that here the status of the whole repairable system may not be AGAN since the other non-repaired components could have degraded. 

A remedial intervention is called a \textit{perfect remedy} if after the intervention the status of the remedied part returns to AGAN \citep{Andrzejczak2015, Bedford2001} and the root cause of the defect has been well identified and corrected. For example, if the failure of the bushing is only caused by a worn-out insulator, then replacing that insulator by a new one is a perfect remedy.

We can demonstrate the deteriorating process and the maintenance process for a perfect remedy on the graph shown in Figure \ref{fig:remedy}(a). The path shown in the figure is the only part of the root-to-sink path of an event tree or a CEG that models the deteriorating or failure process of the machine. Its root node represents the AGAN status of the component. The sink node here is assumed to represent the condition of the component just before the maintenance. After deploying a perfect remedy, by definition, the status will return to the root node of this sub-path from the sink node. Such recovery of the component is represented by the black dashed line. We call this a \textit{recovery path}. It delivers the effect of a remedy and returns the machine's status to full working order. 

\begin{figure}
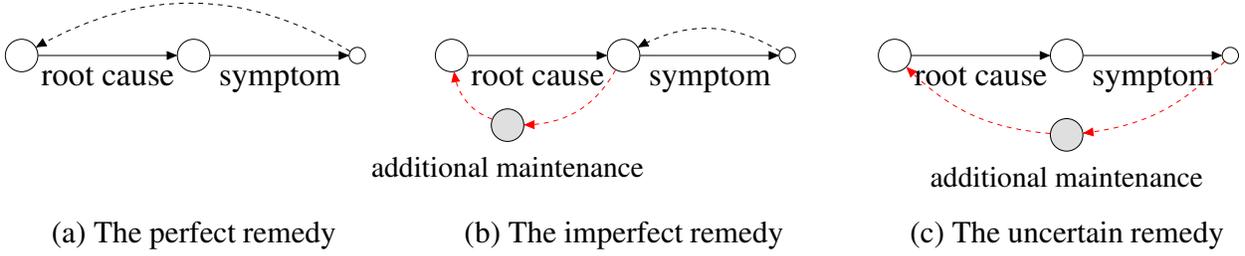

    \centering
    \centering
        \resizebox{\linewidth}{!}{
    \tikz{
    \node[latent,font=\fontsize{35}{0}\selectfont](w0){};
    \node[latent,right = 3cm of w0](w1){};
    \node[latent,right = 3cm of w1,scale=0.5](w2){};
    \node[below = 3cm of w1,scale=1.9](ind1){(a) The perfect remedy};
    \node[latent,right = 1.5cm of w2](w3){};
    \node[latent,right = 3cm of w3](w4){};
    \node[latent,right = 3cm of w4,scale=0.5](w5){};
    \node[obs,below left = 1cm and 2cm of w4](w6){};
    \node[below = 0.2cm of w6,font=\fontsize{18}{0}\selectfont](text){\text{additional maintenance}};
    \node[below = 3cm of w4,scale=1.9](ind2){(b) The imperfect remedy};
    \node[latent,right = 5.5cm of w5](w7){};
    \node[latent,left = 3cm of w7](w8){};
    \node[latent,right = 3cm of w7,scale=0.5](w9){};
    \node[obs,below = 1cm of w7](w10){};
    \node[below= 0.2cm of w10,font=\fontsize{18}{0}\selectfont](text2){\text{additional maintenance}};
    \node[below = 3cm of w7,scale=1.9](ind3){(c) The uncertain remedy};
    \path[->,draw]
    (w0)edge node[below,scale = 2]{root cause}(w1)
    (w1)edge node[below,scale = 2]{symptom}(w2)
    (w2)edge[bend right=30,dashed] node[above,scale=2,color=blue]{}(w0)

    (w3)edge node[below,scale = 2]{root cause}(w4)
    (w4)edge node[below,scale = 2]{symptom}(w5)
    (w5)edge[bend right=30,dashed] node[above,scale=2,color=blue]{}(w4)
    (w4)edge[bend left = 30,dashed,color=red] node[above,scale=3.5]{}(w6)
    (w6)edge[bend left = 30,dashed,color=red] node[above,scale=3.5]{}(w3)
    (w8)edge node[below,scale = 2]{root cause}(w7)
    (w7)edge node[below,scale = 2]{symptom}(w9)
    (w9)edge[bend left = 20,dashed,color=red] node[above,scale=3.5]{}(w10)
    (w10)edge[bend left = 20,dashed,color=red] node[above,scale=3.5]{}(w8)
    }
    }
    \caption{Three types of remedy: the perfect remedy, the imperfect remedy and the uncertain remedy. The solid paths model the deteriorating process, the dashed paths, i.e. the recovery paths, represent the maintenance process. The grey nodes represent the uncertain maintenance. }
    \label{fig:remedy}
\end{figure}

If the root cause is not remedied but only a subset of the secondary or intermediate faults are remedied, then after the intervention the status of the repaired component will not return to AGAN. We call such an intervention an \textit{imperfect remedy}. Figure \ref{fig:remedy}(b) depicts this maintenance process. The black dashed line pointing from the sink node to the interior node along the root-to-sink path. In order to fully restore the machine, additional maintenance is needed. This is the source of the uncertainty associated with the imperfect remedy. If an intervention is made at time $t$, then the necessary additional maintenance could not be known at that time. So in particular such maintenance would not be recorded in the maintenance log. We represent this necessarily subsequent maintenance by a grey vertex. As shown in Figure \ref{fig:remedy}(b), after this unobserved maintenance, the status of the component returns to AGAN as depicted by the red dashed line.  

If we cannot extract precisely what remedial maintenance was taken in an intervention recorded in the maintenance logs, then such intervention is classified as an \textit{uncertain remedy}. Diagnostic information has not yet been made available so the root cause of the failure cannot be determined. The recovery path is not observed, so there is no black dashed line pointing from the sink node to any node that lies upstream of it, see Figure \ref{fig:remedy}(c). A follow-up check and maintenance will be carried out in order to restore the broken part, which is represented by a grey interior vertex along the red path.


\subsection{Formulation of causal algebras for remedial intervention}
Based on the three types of remedy, we formally generalise the intervened regime into two categories: the \textit{perfect remedial intervention} regime and the \textit{random remedial intervention} regime. The former refers to a perfect remedy, whilst the latter indicates a regime arising when uncertainty is introduced to the intervened system because of the uncertain impact of the remedial action. Note that this regime includes the imperfect remedy and the uncertain remedy. These regimes are defined formally below.

When constructing the core event variables for the GN, we have one variable representing the maintenance, see Figure \ref{fig:global net}. Let $R$ denote this variable, $R\in \bm{U}$, whose state space $\mathbb{R}$ consists of different observed maintenance core events. We now embed some causal terminology used by reliability engineers within the semantics we have developed above. Let $A$ be the action variable representing the uncertain subsequent maintenance. The action variable takes values in $\mathbb{A}=\{a_1,...,a_{n_A}\}$, which is a known finite set. The effect of the observed maintenance is informed by a binary variable $\delta$, called a \textit{status indicator} and defined as 
\begin{equation}
    \delta=\begin{cases}
    1, & \text{ if the status is AGAN after maintenance,}\\
    0, & \text{ otherwise.}
    \end{cases}
\end{equation} The value of $\delta$ determines whether the subsequent maintenance $a\in \mathbb{A}$ is required. 
Some root causes are corrected by a remedial intervention. Let $\bm{w}_{R}=\{w_{l_1},...,w_{l_n}\}$ denote the set of $n$ positions corresponding to all root causes. We define a new variable over $\bm{w}_{R}$ to indicate which positions are affected. This is called an \textit{intervention indicator} and denoted by $\bm{I}=(I_{w_{l_1}},...,I_{w_{l_n}})$. For $i\in\{1,...,n\}$,
\begin{equation}
    I_{w_{l_i}} = \begin{cases}
    1, & \text{ if the root cause represented on $w_{l_i}$ is fixed by the maintenance, }\\
    0, & \text{ otherwise.}
    \end{cases}
\end{equation}

Based on the difference between perfect and random remedial intervention regimes, the intervention indicator $\bm{I}$ can be constructed from a two-component mixture model. Let $q\in[0,1]$ be the probability of $\delta=1$. Thus, for $r\in\mathbb{R}$,
\begin{equation}
    p(\bm{I}|r)= q\times p(\bm{I}|r, \delta=1) + (1-q)\times p(\bm{I}|r, \delta=0).
\end{equation}

For a random remedial intervention regime, $(r,a)$ should provide sufficient information to identify its associated root cause. Translating this assertion into a conditional independence statement gives the next assumption.
\begin{assumption}
The intervention indicator is independent of $\lambda$ and the status indicator conditional on the observed and unobserved maintenance.
\begin{equation}
  \bm{I}\indep (\lambda,\delta)|R, A.
\end{equation}
\end{assumption}
One more assumption is made concerning the action variable.
  \begin{assumption}
   The uncertain subsequent maintenance depends on the observed deteriorating process and the observed maintenance.
  \end{assumption}
  
 Under these assumptions we can now express the distribution for the intervention indicator under the random remedial intervention. Thus, 
\begin{equation}\label{equ:I}
       \begin{split}
         p(\bm{I}|r, \delta=0)&\propto p(\bm{I}|r,a, \delta=0)p(a|r, \delta=0)  \\
         &\propto p(\bm{I}|r,a)p(a|\lambda,r, \delta=0)p(\lambda|r, \delta=0).
       \end{split} 
\end{equation} 

The objective behind correcting a root cause is to prevent the fault caused by it reoccurring. This implies that after a remedial intervention, the probability distribution over root causes needs to be transformed from what it was in the idle system. This can be represented as a stochastic manipulation on the idle system. 
More formally the effect of this stochastic manipulation under intervention $r$ can be represented by the map:
$$Z_{r}:(C,\bm{\theta})\mapsto(\hat{C},\bm{\hat\theta}).$$ The domain of this map is the topology and primitive probabilities of the CEG: $V\bigotimes E\bigotimes\Theta$. Under this transformation, the topology of the CEG is assumed to be unchanged after intervention: $\hat C=C$.\footnote{Note that although the topology of the CEG is unchanged, the stages or positions may change as a result of manipulation.}

If a root cause $x_i$ has been corrected by an intervention, then the transition probability along the edge $e(x_i)$ needs to be reduced because $x_i$ has been corrected and is unlikely to reoccur immediately after the maintenance. Within our Bayesian model, let $f(\cdot)$ denote the prior distribution over the primitive probabilities $\bm{\theta}_w$. Suppose the hyperparameters are $\bm{\alpha}_w$, then 
\begin{equation}
    \bm{\theta}_w\sim f(\bm{\theta};\bm{\alpha}_w).
\end{equation} We can now represent an intervention on the system through defining a map that will transform $\bm{\alpha}_w$. Given $r$, let the intervention indicator be denoted by $\bm{I}(r)$. Let $w(r)$ represent the set of vertices so that any $w\in w(r)$ satisfies $I_w(r)=1$. The parent nodes of these vertices are represented by $pa(w(r))$. One feature of remedial intervention is that multiple root causes could be subjected to simultaneous interventions. 

Denote the post-intervention hyperparameter vector by $\hat{\bm{\alpha}}$. We define a generic function $\kappa$ to update the hyperparameters:
\begin{equation}
    \kappa:(\bm{\alpha}_{pa(w(r))},\bm{I}(r))\mapsto \hat{\bm{\alpha}}_{pa(w(r))},
\end{equation}where $\kappa$ will be determined by domain experts informed by maintenance history \citep{Yu2020,Yu20211}. One example of this map is a linear transformation:
\begin{equation}
    \hat{\bm{\alpha}}_{pa(w(r))} = \bm{\alpha}_{pa(w(r))} + \omega(1-\bm{I}(r)),
\end{equation}where $\omega>0$ is introduced to the map to control the effect of $r$ on the hyperparameters so that the likelihood of each root cause can be adjusted \citep{Yu2020}. We note a similar map was introduced into the semantics of BNs by Eaton and Murphy \citep{Eaton2007} calling it a relaxation of Pearl's \textit{do}-algebra and defining it on a BN where they added intervention indicators \citep{Dawid2002} to signal changes of hyperparameters.



\section{The causal identifiability of a remedial intervention on CEG}\label{sec:identifiability}

The probability distribution over every floret $F\in F(pa(w(r)))$ is manipulated as a result of the remedial intervention. This means every root-to-leaf path in $\Lambda=\Lambda_{pa(w(r))}$ is affected by the intervention, while all root-to-leaf paths in $\overline{\Lambda}=\Lambda(C)/\Lambda$ are not affected.

 Let $C^{\Lambda}$ be a sub-CEG where for every root-to-sink path $\lambda\in \Lambda(C^{\Lambda})$, $\lambda\in\Lambda$; and for every $\lambda\in\Lambda$, we have $\lambda\in \Lambda(C^{\Lambda})$. Similarly, let $ C^{\overline{\Lambda}}$ represent a sub-CEG where for every $\lambda\in \Lambda( C^{\overline{\Lambda}})$, $\lambda\in\overline{\Lambda}$; and for every $\lambda\in\overline{\Lambda}$, we have $\lambda\in\Lambda(C^{ \overline{\Lambda}})$.
Let $\pi^{\Lambda}(\cdot)$ and $\pi^{\overline{\Lambda}}(\cdot)$ denote the probability mass functions over $C^{\Lambda}$ and $C^{\overline{\Lambda}}$ respectively.



Under remedial intervention, the controlled d-events are root causes that are potentially affected by the remedy. Let the controlled d-events be ``$x$''. Then the probability of passing $w\in w(x)$ is manipulated given $r$, where $w(x)=ch(pa(w(r)))$. Note that due to the asymmetry of the CEG, $w(x)$ may not be a fine cut \citep{Wilkerson2020}. Let $\overline{w}=\bm{w}_R/w(x)$.

Analogously to Thwaites's \citep{Thwaites13} back-door theorem for a singular manipulation on a CEG, we first define some notations for the analogous back-door criterion for remedial intervention on the CEG. The effect d-events, denoted by ``$y$'', can be a failure or a specific symptom. We represent all the root-to-sink paths corresponding to such an effect by $\Lambda_{w(y)}$. The set $\Lambda_{w(z)}$ for some d-events ``$z$'' is analogous to the back-door partition set \citep{Pearl2000} and is chosen such that the back-door criterion shown in Theorem \ref{backdoor ceg} is satisfied. 

Let $\pi^*(pa(w(r))|r)$ denote the post-intervention distribution over $pa(w(r))$, and for $w\in w(x)$,
\begin{equation}
\pi_{\Lambda_{w}}^*=\pi(\Lambda_{w}||\pi^*(pa(w(r))|r)),
\end{equation}
\begin{equation}
    \pi^{\Lambda}_{y}=\pi^{\Lambda}(\Lambda_{w(y)}||\pi^*(pa(w(r))|r)), 
\end{equation}
\begin{equation}
    \pi^{\overline{\Lambda}}_{y}=\pi^{\overline{\Lambda}}(\Lambda_{w(y)}||\pi^*(pa(w(r))|r)). 
\end{equation}

The theorem below now formulates the back-door criterion for the stochastic manipulation from remedial intervention on the CEG.
\begin{theorem}\label{backdoor ceg}
Given the remedial intervention $r$, the causal effect of the manipulation $do(\pi^*(pa(w(r))|r))$ is given by
\begin{equation}\label{eq:backdoor}
    \pi(\Lambda_{w(y)}||\pi^*(pa(w(r))|r))
    = \pi^{\Lambda}_{y} + \pi^{\overline{\Lambda}}_{y},
\end{equation} where the probabilities
\begin{equation}
  \pi^{\Lambda}_{y} =\sum_{w\in w(x)}\sum_z \pi(\Lambda_{w(y)}|\Lambda_{w},\Lambda_{w(z)})\pi(\Lambda_{w(z)})\pi^*_{\Lambda_{w}},
\end{equation}
\begin{equation}
    \pi^{\overline{\Lambda}}_{y}=\sum_{w'\in \overline{w}}\pi(\Lambda_{w(y)}|\Lambda_{w'})\pi(\Lambda_{w'}).
\end{equation}
\end{theorem}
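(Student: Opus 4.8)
The plan is to establish (\ref{eq:backdoor}) by first splitting the target probability along the affected/unaffected dichotomy of root-to-sink paths, and then computing each of the two resulting terms separately, the second by a back-door adjustment on the affected sub-CEG.

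\emph{Step 1: region decomposition.} Every $\lambda\in\Lambda(C)$ either meets some vertex of $pa(w(r))$ --- so $\lambda\in\Lambda$ and at least one floret along it is rescaled by $do(\pi^*(pa(w(r))|r))$ --- or it does not, so $\lambda\in\overline{\Lambda}$ and no floret along $\lambda$ is touched. Hence $\Lambda_{w(y)}$ is the disjoint union of $\Lambda_{w(y)}\cap\Lambda$ and $\Lambda_{w(y)}\cap\overline{\Lambda}$, and by additivity of the manipulated path measure over the partition $\{\Lambda,\overline{\Lambda}\}$, together with the definitions of $C^{\Lambda}$, $C^{\overline{\Lambda}}$ and of $\pi^{\Lambda}_{y}$, $\pi^{\overline{\Lambda}}_{y}$, we obtain $\pi(\Lambda_{w(y)}||\pi^*(pa(w(r))|r))=\pi^{\Lambda}_{y}+\pi^{\overline{\Lambda}}_{y}$. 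This already yields (\ref{eq:backdoor}); it remains to verify the two displayed formulas for the summands.

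\emph{Step 2: the unaffected term.} Since the manipulation only rescales the primitive probabilities on the florets in $F(pa(w(r)))$ and no path of $C^{\overline{\Lambda}}$ meets $pa(w(r))$, the measure $\pi^{\overline{\Lambda}}$ agrees with the idle-system measure restricted to $\overline{\Lambda}$. By Assumption \ref{assumption: order} no root cause recurs along a path, so each effect-realizing path of $\overline{\Lambda}$ meets exactly one root-cause position, necessarily in $\overline{w}=\bm{w}_{R}/w(x)$. Conditioning on which one, and using the CEG separation property that the future of a position is independent of its past once the position is reached \citep{Smith2008,Wilkerson2020}, the law of total probability gives $\pi^{\overline{\Lambda}}_{y}=\sum_{w'\in\overline{w}}\pi(\Lambda_{w(y)}|\Lambda_{w'})\pi(\Lambda_{w'})$.

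\emph{Step 3: the affected term via back-door adjustment, and the main obstacle.} Although $w(x)=ch(pa(w(r)))$ need not be a fine cut of $C$ --- as the theorem warns --- it is a fine cut of $C^{\Lambda}$: every path of $\Lambda$ visits exactly one child of $pa(w(r))$, whose post-manipulation probability is by definition $\pi^*_{\Lambda_{w}}$, and downstream of that $w$ the measure is unchanged. The vertex $w$ (equivalently the controlled d-event $x$) and the effect $y$ may however be confounded through positions upstream of $pa(w(r))$, so the manipulated conditional cannot be replaced by the idle one directly; this is exactly where the back-door partition $\Lambda_{w(z)}$ enters. Because $z$ is chosen so that conditioning on it blocks every back-door route between $x$ and $y$, the conditional independences carried by the CEG and, across the hierarchy, by the Recursive Causal Markov Condition (\ref{RCMC}) give two invariances: (i) given $\Lambda_{w}$ and $\Lambda_{w(z)}$ the distribution of $\Lambda_{w(y)}$ is the same under the manipulated and the idle system, hence equals $\pi(\Lambda_{w(y)}|\Lambda_{w},\Lambda_{w(z)})$; and (ii) the manipulation does not alter the marginal of $\Lambda_{w(z)}$, contributing the factor $\pi(\Lambda_{w(z)})$. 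Summing over $w\in w(x)$ and over the values $z$ yields $\pi^{\Lambda}_{y}=\sum_{w\in w(x)}\sum_{z}\pi(\Lambda_{w(y)}|\Lambda_{w},\Lambda_{w(z)})\pi(\Lambda_{w(z)})\pi^*_{\Lambda_{w}}$, and substituting Steps 2 and 3 into Step 1 completes the proof. The delicate point throughout is Step 3: one must argue carefully that the $w(x)$-sum over the manipulated weights, once combined with the $\overline{w}$-sum of Step 2, accounts for every affected path exactly once despite the asymmetry of $C$, and that the chosen $z$ genuinely d-separates $x$ from $y$ in the CEG so that both invariances (i) and (ii) are exact rather than approximate; this is where I would lean hardest on the CEG separation theorems of \citep{Smith2008,Wilkerson2020} and on the RCMC to convert ``blocks all back-door paths'' into the precise conditional-independence statements the calculation requires. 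This argument is the direct analogue, for a stochastic remedial manipulation on the parent distribution $\pi^*(pa(w(r))|r)$, of Thwaites's singular back-door theorem for CEGs \citep{Thwaites13}.
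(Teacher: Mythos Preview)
Your proposal is correct and follows the same skeleton as the paper: split along $\Lambda$ versus $\overline{\Lambda}$, observe that the $\overline{\Lambda}$ term is unchanged from the idle system, and handle the $\Lambda$ term by summing singular back-door adjustments over $w\in w(x)$.

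The one place you work harder than necessary is Step~3. The paper does not re-derive the back-door identity from CEG separation and the RCMC; it simply writes $\pi^{\Lambda}_{y}=\sum_{w\in w(x)}\pi(\Lambda_{w(y)}\,||\,\Lambda_{w})\,\pi^{*}_{\Lambda_{w}}$ and then quotes Thwaites's singular back-door theorem \citep{Thwaites13} verbatim to obtain $\pi(\Lambda_{w(y)}\,||\,\Lambda_{w})=\sum_{z}\pi(\Lambda_{w(y)}\mid\Lambda_{w},\Lambda_{w(z)})\,\pi(\Lambda_{w(z)})$. In particular the RCMC (\ref{RCMC}) is never invoked --- Theorem~\ref{backdoor ceg} lives entirely at the CEG level, so the cross-level independence between core-event variables and floret variables is irrelevant here, and the ``delicate point'' you flag dissolves once you recognise each summand as an instance of the already-proved singular case. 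Your Step~2 is actually more careful than the paper's, which dispatches $\pi^{\overline{\Lambda}}_{y}$ in a single sentence without the explicit conditioning argument you give.
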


\begin{proof}
The two components in equation \ref{eq:backdoor} correspond to $C^{\Lambda}$ and $C^{\overline{\Lambda}}$ respectively. As mentioned above, the probability of passing along $\Lambda_{w(y)}$ within $C^{\overline{\Lambda}}$ is the same as that in the idle system. We will therefore only focus on the first term on the RHS of the equation. 

 The probability of a root-to-sink path passing the vertex $w\in w(x)$ is reassigned when the post-intervention distribution $\pi^*(pa(w(r))|r)$ is imported to the system. Then we have
\begin{equation}\label{eq:theorem4}
\begin{split}
      \pi^{\Lambda}_{y}& =\sum_{w\in w(x)}\pi(\Lambda_{w(y)}||\Lambda_{w})\pi(\Lambda_{w}||\pi^*(pa(w(r))|r))\\
   & =\sum_{w\in w(x)}\pi(\Lambda_{w(y)}||\Lambda_{w})\pi_{\Lambda_{w}}^*.
   \end{split}
   \end{equation}
 As proved by Thwaites \citep{Thwaites13} for a singular manipulation, we have:
 \begin{equation}
     \pi(\Lambda_{w(y)}||\Lambda_{w}) = \sum_z \pi(\Lambda_{w(y)}|\Lambda_{w},\Lambda_{w(z)})\pi(\Lambda_{w(z)}).
 \end{equation}
Applying this result to equation \ref{eq:theorem4}, we then have that
\begin{equation}
\pi^{\Lambda}_{y}
= \sum_{w\in w(x)}\sum_z \pi(\Lambda_{w(y)}|\Lambda_{w},\Lambda_{w(z)})\pi(\Lambda_{w(z)})\pi^*_{\Lambda_{w}}.
\end{equation}
\end{proof}

Therefore if the post-intervention distribution $\pi^*(pa(w(r)|r)$ is known given $r$, the effect of a perfect remedial intervention can be represented as 
\begin{equation}
    \pi(\Lambda_{w(y)}|do(r))=\pi(\Lambda_{w(y)}||\pi^*(pa(w(r))|r)),
\end{equation} which has been shown to be identifiable in Theorem \ref{backdoor ceg}.

Under a random remedial intervention, the value of $\bm{I}(r)$ is no longer deterministic. The intervention indicator should be drawn from a distribution conditional on the additional maintenance $a$ as shown in equation \ref{equ:I}. So the effect of a random remedial intervention can be identified through the formula:
\begin{equation}
    \begin{split}
        \pi(\Lambda_{w(y)}|do(r)) & = \sum_{a}\pi(\Lambda_{w(y)},a|do(r))\\
        & = \sum_a\pi(\Lambda_{w(y)}|do(r),a)p(a|do(r))\\
        & = \sum_a\pi(\Lambda_{w(y)}||\pi^*(pa(w(r,a))|r))p(a|r),
    \end{split}
\end{equation} where $\pi^*(pa(w(r,a))|r)$ is assumed to be known given $r$ and $a$.

\section{Missingness}\label{sec:missing}Before establishing the framework of missingness for the GN-CEG causal model, we make the following assumption:


\begin{assumption}\label{complete}
 The CEG $C$ is faithfully constructed so that there is no domain knowledge that has not been expressed within the tree. 
\end{assumption}


On the basis of these two assumptions, we summarise the following three categories of missingness for our model. Type 1 missingness is the missingness of the time spent on each core event and the time taken by a transition from one position to another on the underlying CEG. The path on the CEG is not observed either. Both Type 2 and Type 3 missingness involve missing core events. Type 2 missingness is the missingness of core events that causes difficulties in learning the full latent root-to-sink path on the CEG. Type 3 missingness, on the other hand, refers to the case when some core events are missing but a root-to-sink path can still be learned. 

Note that if we still use the aforementioned function $Q$ to learn a path on the CEG, when Type 2 missingness occurs, there could be multiple root-to-sink paths that are associated to a subgraph $G$ on $C$. In this case, $Q$ might not be well-defined. To solve this problem, we next introduce a CEG with missing indicators, called an \textit{M-CEG}. The M-CEG enables us to identify a single root-to-sink path for each document, even when the core events are partially observed. 
\subsection{Latent time and states on the CEG}
We only observe the total failure time or total deteriorating time of a system from the dataset. The time interval between any two consecutive core events is unobservable. In our hierarchical model, we simply ignore this time and do not estimate it in our analysis. However, this does not mean that the core events are discrete-time. Let $T_i$ denote the holding time for position $w_i\in V_C$. Barclay \textit{et.al} \citep{Barclay2015} introduced holding time distributions in the CEG so that the holding time depends only on the current and the receiving positions. We can prove the following conditional independence statement for the holding time. 
\begin{proposition} Given a consistent assignment $H$, the holding time $T_i$ for every $w_i\in V_C$ satisfies:
\begin{equation}
    T_i\indep \bm{U}^H_i|Y^H(w_i).
\end{equation} 
\end{proposition}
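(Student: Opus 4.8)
The plan is to derive the claimed conditional independence from the structural features already established: the holding-time distribution depends, by construction \citep{Barclay2015}, only on the current position and the receiving position, and the community $\bm{U}_i$ is the set of core event variables whose latent variable is the floret variable $Y(w_i)$. So $Y^H(w_i)$ encodes exactly the information ``which edge out of $w_i$ is traversed'', i.e. the current position together with the receiving position. First I would make this precise: fix a consistent assignment $H$ and recall that $Y(w_i)$ takes the value $y_{w_i,w'}$ precisely when the path leaves $w_i$ along $e_{w_i,w'}$, and is $0$ when the path misses $w_i$ (in which case $T_i$ is degenerate / not defined, so the statement holds trivially and we restrict attention to $I(w_i)=1$). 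On the event $\{Y(w_i)=y_{w_i,w'}\}$ the pair (current position, receiving position) is determined, so by the Barclay \emph{et al.} holding-time model the conditional law of $T_i$ is already fixed by $Y^H(w_i)$ alone.

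Next I would bring in the community $\bm{U}^H_i$. By the definition of a community, every variable in $\bm{U}_i$ is a core event variable that interprets a d-event represented on the floret $F(w_i)$; more exactly, the sub-community $\bm{U}_{i,k}$ realised is a deterministic function of the value $y_{w_i,w_k}$ taken by $Y(w_i)$. Hence conditioning on $Y^H(w_i)$ already fixes which sub-community is active and the joint law of its variables is governed by the subgraph $G_{i,k}$; the only extra randomness in $\bm{U}^H_i$ beyond $Y^H(w_i)$ lives strictly \emph{below} the CEG level, in the within-community BN. Since $T_i$ is a quantity attached to the transition at the CEG level and the holding-time model makes it a function of the current and receiving positions only, $T_i$ carries no information about the internal configuration of $\bm{U}_i$ once $Y^H(w_i)$ is given. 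Formally I would write
\begin{equation}
p\bigl(t_i \mid \bm{u}^H_i, y^H(w_i)\bigr) = p\bigl(t_i \mid y^H(w_i)\bigr),
\end{equation}
which is exactly the asserted independence $T_i \indep \bm{U}^H_i \mid Y^H(w_i)$.

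I would justify the displayed factorisation by appealing to the nested (two-level RBN) structure established earlier in Section \ref{collocation} together with the CMC/RMC already proved: the holding time $T_i$, the floret variable $Y(w_i)$, and the community $\bm{U}_i$ form a short chain $\bm{U}_i \leftarrow Y(w_i) \rightarrow T_i$ in the flattening $H^{\downarrow}$, because $Y(w_i)$ is the direct superior of the variables in $\bm{U}_i$ and $T_i$ is determined at the floret level with $Y(w_i)$ as its only relevant parent. Applying the Markov property of $H^{\downarrow}$ (which, as noted in the excerpt, preserves all dependence queries of the nested network) to this chain gives the conditional independence. The main obstacle I anticipate is not a deep one but a bookkeeping one: I must argue carefully that $T_i$ has no arrow from any member of $\bm{U}_i$ in the flattening — equivalently, that no core event variable is a direct cause of the holding time — which rests on invoking Assumption \ref{complete} (faithful construction of $C$, so every temporal/causal dependence the engineer believes in is already in the topology) and on the Barclay \emph{et al.} modelling convention that holding times depend only on current and receiving positions. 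Once that modelling assumption is stated explicitly, the rest is a one-line application of the graphoid axioms to the chain, entirely parallel to the proof of the CMC proposition above.
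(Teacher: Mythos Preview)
Your approach is correct and shares the paper's core idea: work in the flattening $H^{\downarrow}$ and show that $Y^H(w_i)$ d-separates $T_i$ from every $U_{i,k}\in\bm{U}_i^H$. The execution differs, however. You argue abstractly from the parent set of $T_i$: by the Barclay \emph{et al.} convention $T_i$ has $Y^H(w_i)$ as its sole parent and no children, so every path out of $T_i$ begins $T_i\leftarrow Y^H(w_i)$ and is blocked once $Y^H(w_i)$ is conditioned on. The paper instead enumerates the relevant paths explicitly, splitting on whether $pa^H(U_{i,k})$ lies inside $\bm{U}_i^H$ or in a neighbouring community $\bm{U}_j^H$ with $w_j\in pa^H(w_i)$. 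In the second case the paper exhibits the longer path
\[
T_i \leftarrow Y^H(w_i)\leftarrow I^H(w_i)\leftarrow Y^H(w_j)\rightarrow pa^H(U_{i,k})\rightarrow U_{i,k},
\]
which your ``short chain $\bm{U}_i\leftarrow Y(w_i)\rightarrow T_i$'' picture does not display. This is not a gap in your argument---that path is still blocked at $Y^H(w_i)$, and your only-parent observation covers it automatically---but it is the one place where your sketch is less explicit than the paper. What your route buys is economy: once you justify that $T_i$ has no arrow to or from anything except $Y^H(w_i)$ in $H^{\downarrow}$, the conditional independence with \emph{all} other variables (not just $\bm{U}_i^H$) follows in one line. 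What the paper's route buys is transparency about exactly which paths exist in the nested structure, which may be useful pedagogically given how recently the flattening was introduced.
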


\begin{proof}
Here we aim to show for every $U_{i,k}\in\bm{U}_i^H$, $Y^H(w_i)$ d-separates $T_i$ and $U_{i,k}$, \textit{i.e.} $T_i\indep U_{i,k}|Y^H(w_i)$. The parent node of $U_{i,k}$ within the GN can be in the same community as $U_{i,k}$ or in another community, say $\bm{U}_j^H$ such that $w_j \in pa^H(w_i)$. Below we discuss these two scenarios separately.

If $pa^H(U_{i,k})\subseteq \bm{U}^H_i$, from the flattening graph of the nested network, there are two types of paths between $T_i$ and $U_{i,k}$:
\begin{enumerate}
    \item $T_i\leftarrow Y^H(w_i)\rightarrow U_{i,k}$,
    \item $T_i\leftarrow Y^H(w_i)\rightarrow pa^H(U_{i,k})\rightarrow U_{i,k}$.
\end{enumerate} There are no collars along these paths. Both paths can be blocked by $Y^H(w_i)$, therefore, $T_i\indep U_{i,k}|Y^H(w_i)$.

If $pa^H(U_{i,k})\subset \bm{U}_j^H$, where $w_j$ is the parent of $w_i$ so that $Y^H(w_j)$ instantiated given $H$, then there exists another type of paths:
\begin{enumerate}
\setcounter{enumi}{2}
    \item $T_i\leftarrow Y^H(w_i)\leftarrow I^H(w_i)\leftarrow Y^H(w_j)\rightarrow pa^H(U_{i,k})\rightarrow U_{i,k}$.
\end{enumerate} This path is also blocked by $Y^{H}(w_i)$. Therefore we always have $T_i\indep \bm{U}_i^H|Y^H(w_i)$.
\end{proof}

As an immediate result, we have the following corollary.

\begin{corollary}
Given a consistent assignment $H$, the holding time $T_i$ is independent of the number of observations that trigger a transition from the vertex $w_i\in V_C$ conditional on $Y^H(w_i)$.
\end{corollary}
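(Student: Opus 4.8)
The plan is to obtain the corollary as an immediate consequence of the proposition just proved, using only the elementary fact that conditional independence is stable under measurable transformations of one of its arguments. So the work is almost entirely bookkeeping: identifying what ``the number of observations that trigger a transition from $w_i$'' is in terms of objects already defined, and checking it is a function of the community variables.

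First I would make the random quantity precise. Recall from Section \ref{collocation} that, given the assignment $H$, the realised value $y_{w_i,w_k}$ of the floret variable $Y(w_i)$ selects a sub-community $\bm{U}_{i,k}\subseteq\bm{U}_i$, and the transition along the edge $e_{w_i,w_k}$ corresponds precisely to the observation of the core events whose variables lie in $\bm{U}_{i,k}$. Hence the count of such observations, call it $N_i$, is a measurable function $N_i=g(\bm{U}^H_i)$ of the community variables $\bm{U}^H_i$ (and of $Y^H(w_i)$, which we are conditioning on in any case, so it may harmlessly be fed to $g$ as well). I would state this reduction explicitly, citing the definitions of community, sub-community and d-event and the notion of a consistent assignment $H$.

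Second, I would invoke the proposition, $T_i\indep\bm{U}^H_i\mid Y^H(w_i)$, together with the standard property that $A\indep B\mid C$ implies $A\indep h(B,C)\mid C$ for any measurable $h$; applying it with $A=T_i$, $B=\bm{U}^H_i$, $C=Y^H(w_i)$ and $h=g$ gives $T_i\indep N_i\mid Y^H(w_i)$, which is the claim. No further d-separation argument is needed beyond the one carried out for the proposition. The only genuinely delicate point, and the one I would spell out most carefully, is the first step: confirming that the number of triggering observations carries no randomness beyond $\bm{U}^H_i$ given $H$ (in particular, that once $H$ is fixed the selected sub-community, and hence its size, is determined). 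Once that is settled the corollary follows with no additional probabilistic content.
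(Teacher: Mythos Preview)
Your proposal is correct and matches the paper's approach: the paper provides no separate proof for this corollary, merely introducing it with ``As an immediate result, we have the following corollary,'' so your argument is precisely the explicit unpacking of that claim. Deriving the count $N_i$ as a measurable function of $\bm{U}^H_i$ (and the conditioning variable $Y^H(w_i)$) and then invoking closure of conditional independence under such functions is exactly the intended route.
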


\subsection{Floret-dependent missingness and M-CEG}
In Barclay \textit{et.al} \citep{Barclay2014}, missing at random (MAR), missing completely at random (MCAR) and missing not at random (MNAR) \citep{rubin1976} on the CEG were discussed. This motivated a second type of missingness in a fault analysis we call the \textit{floret-dependent missingness}. 

There are two main factors contributing to the floret-dependent missingness. The first one is the unobservability of $F(w)$. Note that some core events will never be missing, such as the component's name. In contrast, the root causes are highly likely to be unobserved. The florets representing root causes, $F(pa(\bm{w}_R))$, are therefore highly likely to be missing. We incorporate the unobservability of $F(w)$ by introducing missing indicators to the CEG.

We therefore define a new CEG with missing indicators, call this a \textit{missingness CEG} (M-CEG), and denote this by $C^M$. An event tree with missing indicators is called an \textit{M-event tree}, denoted by $T^M$. The M-CEG is derived from the corresponding $T^M$ given $C$. Call this a \textit{fact CEG} (F-CEG) with the \textit{fact event tree} $T$.  


\begin{definition} If the information represented on the floret $F(v)$ is unobservable for $v\in V_T$ , then we define a \textit{missing floret indicator} $B_v$ so that: 
\begin{equation}
B_v=
\begin{cases} 1, \text{ if $F(v)$ is missing,}\\
0, \text{ otherwise.} 
\end{cases} 
\end{equation}
\end{definition}
To import this binary variable into the event tree, a new floret $F(v')=(v',E(v'))$, whose edge set is $E(v')=\{e_{v',v''},e_{v',v}\}$, is constructed to represent $B_v$. The edge $e_{v',v''}$ entering $v''\in V_T$ is labelled as ``missing''. The edge $e_{v',v}$ entering $F(v)$ is labelled as ``non-missing''. An example is shown in Figure \ref{fig:example m-ceg}(a).

\begin{definition} An \textit{M-CEG topology} $C^M=(V_{C^M},E_{C^M})$ is derived from an M-event tree $T^M$ as a function of the fact CEG $C$ and the fact tree $T$. The vertices are classified into four types. For any vertex $w\in V_{C^M}$, if the information represented by $F(w)$ is unobservable, then $w\in V^{M}$. If the information represented by $F(w)$ is always observable, then $w\in V^{O}$. If $w$ corresponds to the missing floret indicator, then $w\in V^{MI}$. If it is none of these, then $w$ must be a terminal node $w\in V^S$. The vertex set is therefore $$V_{C^M}=V^M\cup V^O\cup V^{MI}\cup V^S.$$
\end{definition}

The second factor contributing to the floret-dependent missingness is human error. The field engineers have all been trained before doing regular checks and filling forms. But we cannot ensure that every field engineer has a perfect grasp of all the domain knowledge. For example, if the field engineer has limited understanding about the root cause, then he will not be capable of recording it in detail. This is therefore a knowledge gap between the domain experts and the field engineers. 

Therefore it is important that this heterogeneity in engineers should be part of our model. We partition all the engineers into $N_e$ disjoint clusters $\{l_1,...,l_{N_e}\}$, where $0<N_e<\infty$. For each cluster $l_i$, $i\in \{1,...,N_e\}$, we assign a heterogeneity parameter to reflect the competence of engineers in this group. Let $\bm{\eta}=\{\eta_1,...,\eta_{N_e}\}$ denote the set of heterogeneity parameters. We encode the heterogeneity of engineers to the M-CEG by assigning a prior probability mass function $p(\eta_j)$ to every heterogeneity parameter $\eta_j$ and drawing the transition probabilities from a mixture model:
\begin{equation}\label{equ:16}
    \bm{\theta}_{ w}\sim\sum_{j=1}^{N_e}f(\bm{\theta}_w;\bm{\alpha}_w,\eta_j)p(\eta_j).
\end{equation}

\subsection{Event-dependent missingness}

For any $w_i\in V^{M}$, suppose the floret variable $Y(w_i)=y_{w_i,w_j}\neq 0$ so that there is a transition on the M-CEG along $e_{w_i,w_j}$. Given an assignment $H$, let $\bm{U}^H_{i,j}=\{U_{1,ij}^H,...,U_{n,ij}^H\}$, $n>0$, denote the community of core event variables that trigger this transition. 

It is likely that $\bm{U}^H_{i,j}$ is partially observed by the field engineers. We assume that as long as one core event variable $U_r\in \bm{U}^H_{i,j}$ is observed, the transition along $e_{w_i,w_j}$ can still be triggered. 

We firstly define a missing indicator for every core event variable.

\begin{definition} For every core event variable $U_{k,ij}\in\bm{U}^H_{i,j}$, we define a \textit{missing event indicator} $B_{k,ij}^H$ so that
\begin{equation}
    B_{k,ij}^H=
\begin{cases} 1, \text{\hspace{6mm}if $U_{k,ij}^H$ is missing,}\\
0, \text{\hspace{6mm}otherwise.} 
\end{cases} 
\end{equation}
 The set of missing event indicators associated to $\bm{U}^H_{i,j}$ that trigger a transition from $w_i$ to $w_j$ is then given by
\begin{equation}
   \bm{B}^H_{i,j}=\{B_{1,ij}^H,\cdots,B_{n,ij}^H\}.
\end{equation}
\end{definition}

The missing event indicators can be added to the GN-CEG model provided we make the following conditional independence assumptions. Note that the missing event indicators lie within the GN.
\begin{assumption}
Given a consistent assignment $H$, the missing event indicator $B_{k,ij}^H$ shares the same direct superior as the core event variable $U_{k,ij}^H$.
\end{assumption}
Under this assumption, we have \begin{equation}
 Dsup^H(B_{k,ij}^H)=Y^H(w_i).   
\end{equation}
\begin{assumption}\label{assumption N}
Given a consistent assignment $H$, the parent variables of a missing event indicator $B_{k,ij}^H$ on the GN consist of the $N$ core event variables preceding $U_{k,ij}^H$. We call this missingness the \textit{N-event-dependent missingness}. Let $N >0$ where we allow for the possibility that $N\geq k$. 
\end{assumption} 

When $N\geq k$, the core event variables whose direct superior is not $Y^H(w_i)$ can also be informative about the observability of $U_{k,ij}^H$. Under assumption \ref{assumption N}, the set of parent variables of $B_{k,ij}^H$ that lie on the GN is
\begin{equation}
pa^H(B_{k,ij}^H)=\{U_{k-1,ij}^H,...,U_{k-N,ij}^H\}.
\end{equation}Applying the RCMC implies
\begin{equation}
   B_{k,ij}^H\indep nd^H(B_{k,ij}^H)|Y^H(w_i), pa^H(B_{k,ij}^H).
\end{equation}The set of parent variables of $B_{k,ij}^H$ in the flattening $H^{\downarrow}$ is
\begin{equation}
   pa^{H^{\downarrow}}(B_{k,ij}^H) =\{Y^H(w_i), U_{k-1,ij}^H,...,U_{k-N,ij}^H\}.
\end{equation}

As with the floret-dependent missingness, the heterogeneity in engineers is also responsible for the event-dependent missingness. We can now assign a distribution to the missing event indicator conditional on the parent variables and the heterogeneity parameter: $p(b_{k,il}^H|pa^{H^{\downarrow}}(B_{k,il}^H),\eta_j)$. 

Figure \ref{fig:MRCG} shows an example of illustrating graphically the nested causal network with missing event indicators $B_1$, $B_2$ and $B_3$. This is a 1-event-dependent missingness, where $B_i$ depends on $U_{i-1}$ and $Y_{w_0}=y_{w_0,w_1}$.
\begin{figure}[H]
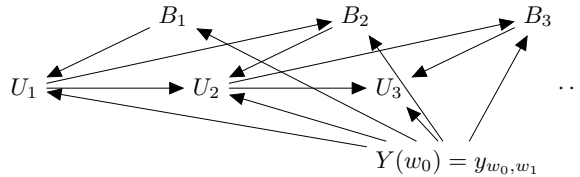

    \centering
    \resizebox{0.6\linewidth}{!}{
    \tikz{
\node[font=\fontsize{10}{0}\selectfont](u1){$U_1$};
\node[right = 2cm of u1,font=\fontsize{10}{0}\selectfont](u3){$U_2$};
\node[right = 2cm of u3,font=\fontsize{10}{0}\selectfont](u4){$U_3$};

\edge{u3}{u4};
\node[below right = 0.5cm and 2cm of u3,font=\fontsize{10}{0}\selectfont](y1){$Y(w_0)=y_{w_0,w_1}$};
\edge{y1}{u1,u3,u4};
\node[above right = 0.5cm and 1.5cm of u1,font=\fontsize{10}{0}\selectfont](b1){$B_1$};
\node[above right = 0.5cm and 1.5cm of u3,font=\fontsize{10}{0}\selectfont](b3){$B_2$};
\node[above right = 0.5cm and 1.5cm of u4,font=\fontsize{10}{0}\selectfont](b4){$B_3$};
\edge{u1}{u3};
\edge{y1}{b1,b3,b4};
\edge{b1}{u1};
\edge{u1}{b3};
\edge{b3}{u3};
\edge{b4}{u4};
\edge{u3}{b4};
\node[right = 2cm of y1,font=\fontsize{10}{0}\selectfont](quot1){$\cdots$};
\node[right = 2cm of u4,font=\fontsize{10}{0}\selectfont](quot2){$\cdots$};
    }
    }
    \caption{Adding the missing event indicators to the flatenning of the nested causal network.}
    \label{fig:MRCG}
\end{figure}

\subsection{Identifiability of a causal effect with missingness}

We now study causal identifiability on the M-CEG when there is floret-dependent missingness. This is analogous to the causal identifiablity on the BN with missingness indicators \citep{Saadati2019}, which is called an m-graph \citep{mohan2013,mohan2014,mohan2018}. 

Let $\bm{R}^O$ denote the variables that are observed in all data cases, $\bm{R}^M$ denote the partially observed variables and $\bm{B}$ denote the missingness indicators. Let $\bm{R}=\bm{R}^O\cup\bm{R}^M$.

Mohan and Pearl \citep{mohan2013,mohan2014} defined recoverability of a joint probability $p(X)$ under different missingness mechanisms on an m-graph. Here a joint probability distribution is recoverable whenever it can be consistently estimated despite the missingness mechanism. As shown by Mohan \textit{et.al} \citep{mohan2013}, the joint distribution is always recoverable when data are MCAR or MAR. The recoverability of the joint distribution is complicated to analyse when data are MNAR because the missing indicators are not independent of $\bm{R}^O$ and $\bm{R}^M$. However, even when the joint distribution cannot be consistently estimated, the probability $p(y|do(x))$ is still estimable from the dataset with missing data \citep{mohan2014}. The recoverability of this probability is a sufficient condition for the identifiability of the causal query in the subgraph comprising of vertices corresponding to $\bm{R}^O$ and $\bm{R}^M$ \citep{mohan2018}.

Saadati and Tian \citep{Saadati2019} have developed a back-door criterion and an adjustment criterion for identifying and recovering causal effects from missing data on an m-graph. The definition is given below. 

\begin{definition}[M-Adjustment Formula \citep{Saadati2019}] Given an m-graph G over $\bm{R}$ and $\bm{B}$, a set $\bm{Z}\subset \bm{R}$ is called an m-adjustment (adjustment under missing data) set for estimating the causal effect of $X$ on $Y$, if, for every model compatible with $G$, and $\bm{W}=\bm{R}^M\cap(X\cup Y\cup \bm{Z})$, then
\begin{equation}
    P(y|do(x))=\sum_{\bm{z}} P(y|x,\bm{z},\bm{B}_{\bm{W}}=0)P(\bm{z}|\bm{B}_{\bm{W}}=0).
\end{equation}
\end{definition}

In this formula, the sufficient condition for $\bm{Z}$ to be an m-adjustment set is that it satisfies the adjustment criterion, $Y\indep \bm{B}_{\bm{W}}|X,\bm{Z}$ and $\bm{Z}\indep \bm{B}_{\bm{W}}$. Note that the adjustment criterion generalises the back-door criterion to the complete graph. As pointed out by \citep{shpitser2012}, if $\bm{Z}$ satisfies the back-door criterion with
respect to $(X, Y)$ in G, then $\bm{Z}$ satisfies the adjustment
criterion with respect to $(X, Y)$ in G. From Thwaites and Smith's work \citep{Thwaites10,Thwaites13}, an analogous back-door partition $\Lambda_{w(z)}$ can be identified from the CEG. So here we focus on adapting the back-door theorem for the M-CEG. 

We begin with the back-door theorem for a singular manipulation on $C^M$ and then extend this to a remedial intervention. Here we denote the probability mass function defined on $C^M$ by $\pi^M(\cdot)$.

As for the F-CEG, when observing a d-event $x$, the transitions along $e(x)$ to $w(x)$ are triggered on the M-CEG. The edges $e(x)$ lie in the florets $F(pa(w(x)))$, so the missing floret indicators are $B_{pa(w(x))}$. Let $w(B_{pa(w(x))}=1)$ denote the vertices in $C^M$ corresponding to $B_{pa(w(x))}=1$. Let $\Lambda_{w(B_{pa(w(x))}=0)}$ denote the set of root-to-sink paths passing through $w(B_{pa(w(x))}=0)$. Here we still use $x$, $y$, $z$ to represent respectively the control, effect d-events and the d-events whose corresponding nodes on the M-CEG constitute the back-door block set. Let $w(z)\subset V^O\cup V^M$. Let $$\bm{W}=V^M\cap(pa(w(x))\cup pa(w(y))\cup pa(w(z))).$$

We can now derive the M-back-door criterion for a singular manipulation on an M-CEG.
\begin{theorem}\label{theorem:mceg}
 The set of paths $\Lambda_{w(z)}$ is an M-back-door partition on $C^M$ for estimating the causal effect of $\Lambda_{w(x)}$ on $\Lambda_{w(y)}$ whenever \begin{equation}
     \pi(\Lambda_{w(y)}||\Lambda_{w(x)}) = \sum_z \pi^M(\Lambda_{w(y)}|\Lambda_{w(x)},\Lambda_{w(z)},\Lambda_{w(\bm{B}_{\bm{W}}=0)})\pi^M(\Lambda_{w(z)}|\Lambda_{w(\bm{B}_{\bm{W}}=0)}).
\end{equation}
\end{theorem}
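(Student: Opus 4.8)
The plan is to derive the M-back-door formula by first reducing to Thwaites's singular-manipulation back-door formula on the fact CEG $C$, and then ``recovering'' each factor appearing there from the M-CEG $C^M$ by conditioning on the event $\Lambda_{w(\bm{B}_{\bm{W}}=0)}$ that every floret masked by an indicator in $\bm{W}$ is in fact observed. Concretely, I would first note that the conditions defining an M-back-door partition include, as their ``complete-graph'' part, that $\Lambda_{w(z)}$ satisfies the ordinary CEG back-door criterion of Thwaites and Smith \citep{Thwaites10,Thwaites13} with respect to $(\Lambda_{w(x)},\Lambda_{w(y)})$ on $C$ --- this is the CEG counterpart of the implication, invoked in the text via \citep{shpitser2012}, that the back-door criterion entails the adjustment criterion. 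The singular version of Theorem~\ref{backdoor ceg} established by Thwaites \citep{Thwaites13} then gives
\begin{equation}
\pi(\Lambda_{w(y)}||\Lambda_{w(x)}) = \sum_z \pi(\Lambda_{w(y)}\mid\Lambda_{w(x)},\Lambda_{w(z)})\,\pi(\Lambda_{w(z)}),
\end{equation}
with all probabilities those of the idle (fact) system, and with the usual care that $w(x)$ need not be a fine cut, so that $\Lambda_{w(x)}$ is handled as a union over $w\in w(x)$ and the requisite conditional independences are read off the CEG topology \citep{Smith2008,Wilkerson2020}.

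The second step is recoverability in the sense of Mohan and Pearl \citep{mohan2013,mohan2014,mohan2018}. Since the missing floret indicators governing the florets $F(pa(w(x)))$, $F(pa(w(y)))$ and $F(pa(w(z)))$ are precisely those indexed by $\bm{W}=V^M\cap(pa(w(x))\cup pa(w(y))\cup pa(w(z)))$, conditioning on $\Lambda_{w(\bm{B}_{\bm{W}}=0)}$ pins all of these florets to their ``non-missing'' branch, so the restriction of $\pi^M$ to this event coincides with $\pi$ on the sub-CEG spanned by $w(x)$, $w(y)$ and $w(z)$. The remaining M-back-door conditions --- the CEG translations of Saadati and Tian's adjustment conditions \citep{Saadati2019}, namely $\Lambda_{w(y)}\indep \Lambda_{w(\bm{B}_{\bm{W}}=0)}\mid\Lambda_{w(x)},\Lambda_{w(z)}$ and $\Lambda_{w(z)}\indep \Lambda_{w(\bm{B}_{\bm{W}}=0)}$ --- then give
\begin{equation}
\pi(\Lambda_{w(y)}\mid\Lambda_{w(x)},\Lambda_{w(z)}) = \pi^M(\Lambda_{w(y)}\mid\Lambda_{w(x)},\Lambda_{w(z)},\Lambda_{w(\bm{B}_{\bm{W}}=0)}), \qquad \pi(\Lambda_{w(z)}) = \pi^M(\Lambda_{w(z)}\mid\Lambda_{w(\bm{B}_{\bm{W}}=0)}).
\end{equation}
Substituting these two identities into the back-door formula above yields exactly the claimed expression, establishing that $\Lambda_{w(z)}$ is an M-back-door partition on $C^M$.

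The hard part will be the justification underlying the second step: verifying that the two conditional independences can genuinely be read from the topology of $C^M$. This requires checking that each new floret carrying a missing floret indicator $B_v$ sits upstream of $F(v)$ in the M-event tree, so that conditioning on $\bm{B}_{\bm{W}}=0$ does not open a collider path linking $\Lambda_{w(y)}$ to $\{\Lambda_{w(x)},\Lambda_{w(z)}\}$, and that the asymmetry of the CEG --- whereby $w(x)$, $w(y)$ or $w(z)$ may fail to be fine cuts --- remains compatible with this masking structure; both can be handled by the position/fine-cut bookkeeping already used in the proof of Theorem~\ref{backdoor ceg}. A secondary point to treat with care is that, by \eqref{equ:16}, the transition probabilities on $C^M$ are a mixture over the engineer-heterogeneity parameters $\bm{\eta}$, so one must confirm that marginalising over $\bm{\eta}$ (equivalently, using posterior expectations) preserves the independences; this holds because $\bm{\eta}$ enters only the primitive probabilities and is a priori independent both of the root-to-sink path and of $\bm{B}_{\bm{W}}$. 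Once these points are settled, the final substitution is routine.
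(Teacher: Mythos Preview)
The paper does not include a proof of this theorem. Read as it stands, the statement is definitional rather than something to be proved: it mirrors the Saadati--Tian M-Adjustment Formula definition quoted immediately before it, declaring that $\Lambda_{w(z)}$ \emph{is} an M-back-door partition precisely when the displayed identity holds. The surrounding text then records, without proof, the sufficient conditions under which this identity is guaranteed --- namely the ordinary back-door criterion together with the two conditional independences $Y\indep\bm{B}_{\bm{W}}\mid X,\bm{Z}$ and $\bm{Z}\indep\bm{B}_{\bm{W}}$ translated to the CEG setting.

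Your proposal is therefore proving more than the paper does: you are supplying the argument that, under those sufficient conditions, the identity in the theorem actually holds. Your two-step route --- first invoking Thwaites's singular back-door formula on the fact CEG $C$, then recovering each factor from $\pi^M$ by conditioning on $\Lambda_{w(\bm{B}_{\bm{W}}=0)}$ via the two independences --- is exactly the argument the authors gesture at in the paragraph preceding the theorem, and it is correct. The cautionary remarks you flag (checking that the missing-floret indicators sit upstream so no collider paths open, handling the failure of $w(x)$ to be a fine cut, and confirming that the $\bm{\eta}$-mixture does not spoil the independences) are the right places to be careful, but none of them is treated in the paper itself; they are left implicit or delegated to the cited works. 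So your proposal is sound and in fact fills in what the paper omits.
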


Combining Theorem \ref{backdoor ceg} and Theorem \ref{theorem:mceg}, we can now state the following M-back-door theorem for a remedial intervention on $C^{M}$. Let
\begin{equation}
\pi_{\Lambda_{w}}^{M*}=\pi^M(\Lambda_{w}|\Lambda_{w(\bm{B}_{\bm{W}}=0)}, do(\pi^*(pa(w(r))|r))).
\end{equation}
\begin{theorem} The causal effect of a remedial intervention $r$ is identifiable when the M-back-door partition $\Lambda_{w(z)}$ satisfies equation \ref{eq:backdoor} in Theorem \ref{backdoor ceg} and the two terms on the RHS of this equation can be expressed as:
\begin{equation}
    \pi^{\Lambda}_{y}= \sum_{w\in w(x)}\sum_z \pi^M(\Lambda_{w(y)}|\Lambda_{w},\Lambda_{w(z)},\Lambda_{w(\bm{B}_{\bm{W}}=0)})\pi^M(\Lambda_{w(z)}|\Lambda_{w(\bm{B}_{\bm{W}}=0)})\pi^{M*}_{\Lambda_{w}},
\end{equation}
\begin{equation}
    \pi^{\overline{\Lambda}}_{y}= \sum_{w'\in \overline{w}}\pi^M(\Lambda_{w(y)}|\Lambda_{w'},\Lambda_{w(B_{pa(w(y))}=0)},\Lambda_{w(B_{pa(w')}=0)})\pi^M(\Lambda_{w'}|\Lambda_{w(B_{pa(w')}=0)}).
\end{equation}
\end{theorem}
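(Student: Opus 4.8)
The plan is to assemble this result from the two back-door theorems already available: Theorem~\ref{backdoor ceg}, which splits the effect of a remedial intervention over the manipulated sub-CEG $C^{\Lambda}$ and the un-manipulated sub-CEG $C^{\overline{\Lambda}}$, and Theorem~\ref{theorem:mceg}, which recovers a singular-manipulation effect from the missingness-augmented graph $C^M$. First I would observe that contracting the missing-floret-indicator florets of $C^M$ returns the fact CEG $C$ and that the remedial intervention still touches only the florets $F(pa(w(r)))$; consequently the partition $\Lambda(C)=\Lambda\cup\overline{\Lambda}$ used in Theorem~\ref{backdoor ceg} lifts verbatim to $C^M$, and equation~(\ref{eq:backdoor}) still reads $\pi(\Lambda_{w(y)}||\pi^*(pa(w(r))|r))=\pi^{\Lambda}_{y}+\pi^{\overline{\Lambda}}_{y}$. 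It then remains to re-express each of the two terms through the observed-data law $\pi^M$, under the hypothesis that $\Lambda_{w(z)}$ is a valid M-back-door partition, i.e.\ that it meets the m-adjustment conditions $Y\indep\bm{B}_{\bm{W}}\mid X,\bm{Z}$ and $\bm{Z}\indep\bm{B}_{\bm{W}}$ of Saadati and Tian~\citep{Saadati2019}.

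For the manipulated term, Theorem~\ref{backdoor ceg} already gives $\pi^{\Lambda}_{y}=\sum_{w\in w(x)}\sum_z \pi(\Lambda_{w(y)}|\Lambda_w,\Lambda_{w(z)})\pi(\Lambda_{w(z)})\pi^{*}_{\Lambda_{w}}$, so it is enough to recover the three factors one by one. Theorem~\ref{theorem:mceg} supplies the recovery of the two observational factors: restricting to the event $\{\bm{B}_{\bm{W}}=0\}$, with $\bm{W}=V^M\cap(pa(w(x))\cup pa(w(y))\cup pa(w(z)))$, turns $\pi(\Lambda_{w(y)}|\Lambda_w,\Lambda_{w(z)})$ into $\pi^M(\Lambda_{w(y)}|\Lambda_w,\Lambda_{w(z)},\Lambda_{w(\bm{B}_{\bm{W}}=0)})$ and $\pi(\Lambda_{w(z)})$ into $\pi^M(\Lambda_{w(z)}|\Lambda_{w(\bm{B}_{\bm{W}}=0)})$. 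The remaining factor is the post-intervention passage probability $\pi^{*}_{\Lambda_{w}}=\pi(\Lambda_w||\pi^*(pa(w(r))|r))$; since $w\in w(x)=ch(pa(w(r)))$, the floret carrying the controlled root cause sits at $pa(w(r))\subseteq pa(w(x))$, so its missing-floret indicator already belongs to $\bm{B}_{\bm{W}}$, and restricting to $\{\bm{B}_{\bm{W}}=0\}$ makes that floret observable. Using the m-adjustment conditions together with the fact that the stochastic manipulation $Z_r$ alters only the hyperparameters $\bm{\alpha}_{pa(w(r))}$ and leaves the missingness mechanism intact, this conditioning introduces no bias, so $\pi^{*}_{\Lambda_{w}}=\pi^{M*}_{\Lambda_{w}}$ with $\pi^{M*}_{\Lambda_{w}}$ as defined immediately before the theorem. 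Substituting the three recovered factors and keeping the sums over $w\in w(x)$ and $z$ gives the stated expression for $\pi^{\Lambda}_{y}$.

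For the un-manipulated term, Theorem~\ref{backdoor ceg} gives $\pi^{\overline{\Lambda}}_{y}=\sum_{w'\in\overline{w}}\pi(\Lambda_{w(y)}\mid\Lambda_{w'})\pi(\Lambda_{w'})$, and every probability here is an idle-system quantity on $C^{\overline{\Lambda}}$, so recovering it is a pure missing-data problem with no intervention in play. Detecting whether a case passed through $w'$ requires the floret $F(pa(w'))$ to be recorded and detecting whether it reached $w(y)$ requires $F(pa(w(y)))$, so I would condition on the events $\{B_{pa(w')}=0\}$ and $\{B_{pa(w(y))}=0\}$, which selects exactly the informative cases. Because no root-cause floret lies on $C^{\overline{\Lambda}}$, the floret-dependent missingness there meets the m-adjustment conditions with an empty adjustment set --- the engineer-heterogeneity mixture of equation~(\ref{equ:16}) only rescales transition probabilities and opens no active path between $\Lambda_{w(y)}$ or $\Lambda_{w'}$ and these indicators --- hence $\pi(\Lambda_{w(y)}\mid\Lambda_{w'})=\pi^M(\Lambda_{w(y)}\mid\Lambda_{w'},\Lambda_{w(B_{pa(w(y))}=0)},\Lambda_{w(B_{pa(w')}=0)})$ and $\pi(\Lambda_{w'})=\pi^M(\Lambda_{w'}\mid\Lambda_{w(B_{pa(w')}=0)})$. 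Summing gives the stated expression for $\pi^{\overline{\Lambda}}_{y}$, and adding the two terms establishes identifiability.

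The hard part will be the bias-free-conditioning step in the second paragraph. Because the M-CEG permits a missing floret or event indicator to depend on core events preceding the controlled root cause --- Assumption~\ref{assumption N} makes this explicit for $N$-event-dependent missingness --- the collection $\bm{B}_{\bm{W}}$ can behave as a collider between the controlled root cause and the effect $y$, so one cannot condition on $\{\bm{B}_{\bm{W}}=0\}$ without justification. The work is to check, using the RCMC of Section~\ref{properties of the gn-ceg} and the conditional-independence assumptions imposed on the missingness mechanism, that for each $w\in w(x)$ the pair $(\Lambda_w,\Lambda_{w(z)})$ blocks every such path in $C^M$, so that Theorem~\ref{theorem:mceg} genuinely transfers vertex-by-vertex; the accompanying bookkeeping --- which missing-floret indicators fall into $\bm{W}$ on the manipulated side versus the extra indicators $B_{pa(w(y))}$ and $B_{pa(w')}$ needed on the un-manipulated side --- is routine once the augmented graph is drawn.
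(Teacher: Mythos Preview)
Your proposal is correct and matches the paper's approach exactly: the paper does not give an explicit proof at all, but introduces the theorem with the sentence ``Combining Theorem~\ref{backdoor ceg} and Theorem~\ref{theorem:mceg}, we can now state the following M-back-door theorem for a remedial intervention on $C^{M}$,'' and your plan is precisely that combination spelled out factor by factor. Your identification of the bias-free-conditioning step as the substantive verification, and your observation that the un-manipulated side needs only the indicators $B_{pa(w(y))}$ and $B_{pa(w')}$ rather than the full $\bm{B}_{\bm{W}}$, go beyond what the paper records but are consistent with it.
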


\begin{figure}
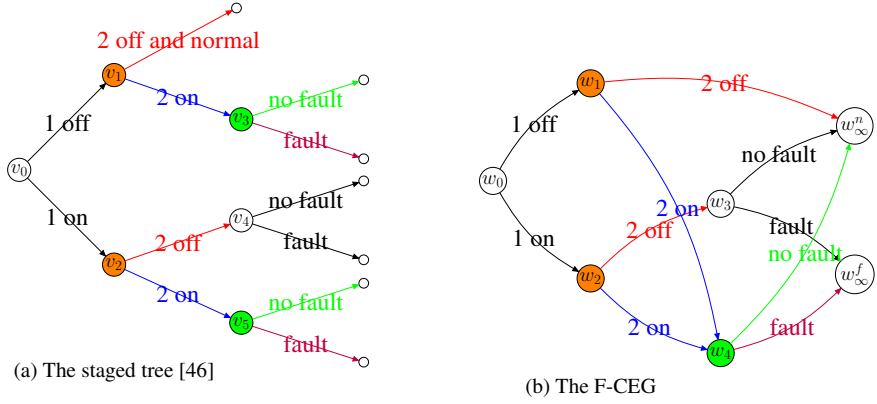

    \resizebox{0.7\linewidth}{!}{
    \tikz{
    \node[latent,font=\fontsize{20}{0}\selectfont](v0){$v_0$};
    \node[latent,above right = 4cm of v0,fill=orange,font=\fontsize{20}{0}\selectfont](v1){$v_1$};
    \node[latent,below right = 4cm of v0,fill=orange,font=\fontsize{20}{0}\selectfont](v2){$v_2$};
    \node[latent,scale = 0.5,above right = 2cm and 4cm of v1](s1){};
    \node[latent,below right = 1cm and 4cm of v1,fill=green,font=\fontsize{20}{0}\selectfont](v3){$v_3$};
    \node[latent,above right = 1cm and 4cm of v2,font=\fontsize{20}{0}\selectfont](v4){$v_4$};
    \node[latent,below right = 1.5cm and 4cm of v2,fill=green,font=\fontsize{20}{0}\selectfont](v5){$v_5$};
    \node[latent,above right = 1cm and 4cm of v3,scale=0.5](s2){};
    \node[latent,below right = 1cm and 4cm of v3,scale=0.5](s3){};
    \node[latent,above right = 1cm and 4cm of v4,scale=0.5](s4){};
    \node[latent,below right = 1cm and 4cm of v4,scale=0.5](s5){};
    \node[latent,above right = 1cm and 4cm of v5,scale=0.5](s6){};
    \node[latent,below right = 1cm and 4cm of v5,scale=0.5](s7){};
    \node[below = 3cm of v2,font=\fontsize{22}{0}\selectfont](t1){\text{(a) The staged tree \citep{Thwaites13}}};
    
    \node[latent,right = 4cm of s4,font=\fontsize{22}{0}\selectfont](w0){$w_0$};
    \node[latent, above right = 4cm of w0,font=\fontsize{22}{0}\selectfont,fill=orange](w1){$w_1$};
    \node[latent,below right = 4cm of w0,font=\fontsize{22}{0}\selectfont,fill=orange](w2){$w_2$};
    \node[latent, above right = 2cm and 4cm of w2,font=\fontsize{22}{0}\selectfont](w3){$w_3$};
    \node[latent,below right = 2cm and 4cm of w2,font=\fontsize{22}{0}\selectfont,fill=green](w4){$w_4$};
    \node[latent,above right = 2cm and 4cm of w4,font=\fontsize{22}{0}\selectfont](wf){$w_\infty^f$};
    \node[latent,above = 4cm of wf,font=\fontsize{22}{0}\selectfont](wn){$w_\infty^n$};
    \node[below = 3cm of w2,font=\fontsize{22}{0}\selectfont](t2){\text{(b) The F-CEG}};
    \path[->,draw]
    (v0)edge node[scale=2.5]{\text{1 off}} (v1)
    (v0)edge node[scale=2.5]{\text{1 on}} (v2)
    (v1)edge[color=red] node[scale=2.5]{2 off and normal} (s1)
    (v1)edge[color=blue] node[scale=2.5]{2 on} (v3)
    (v3)edge[color=green] node[scale=2.5]{no fault} (s2)
    (v3)edge[color=purple] node[scale=2.5]{fault} (s3)
    (v2)edge[color=red] node[scale=2.5]{2 off} (v4)
    (v2)edge[color=blue] node[scale=2.5]{2 on} (v5)
    (v4)edge node[scale=2.5]{no fault} (s4)
    (v4)edge node[scale=2.5]{fault} (s5)
    (v5)edge[color=green] node[scale=2.5]{no fault} (s6)
    (v5)edge[color=purple] node[scale=2.5]{fault} (s7)
   (w0)edge[bend left=15] node[scale=2.5]{\text{1 off}} (w1)
    (w0)edge[bend right = 15] node[scale=2.5]{\text{1 on}} (w2)
    (w2)edge[bend left=15,color=red] node[scale=2.5]{\text{2 off}}(w3)
    (w2)edge[bend right = 15,color=blue] node[scale=2.5]{\text{2 on}}(w4)
    (w1)edge[bend left = 15,color=blue] node[scale=2.5]{\text{2 on}}(w4)
    (w1)edge[bend left=15,color=red] node[scale=2.5]{\text{2 off}}(wn)
    (w3)edge[bend left=15] node[scale=2.5]{\text{no fault}}(wn)
    (w3)edge[bend left=15] node[scale=2.5]{\text{fault}}(wf)
    (w4)edge[bend right=15,color=green] node[scale=2.5]{\text{no fault}}(wn)
    (w4)edge[bend right=15,color=purple] node[scale=2.5]{\text{fault}}(wf)
    }}

    \caption{The staged tree and its corresponding CEG for the example in Section \ref{sec:missing}. The staged tree depicted in (a) has stages: $\{v_0\},\{v_1,v_2\},\{v_3,v_5\},\{v_4\}$ and positions: $\{v_0\},\{v_1\},\{v_2\},\{v_3,v_5\},\{v_4\}$. The stages in the F-CEG are $\{w_0\},\{w_1,w_2\},\{w_3,w_5\},\{w_4\}$.}
    \label{fig:example f-ceg}
\end{figure}
\begin{figure}
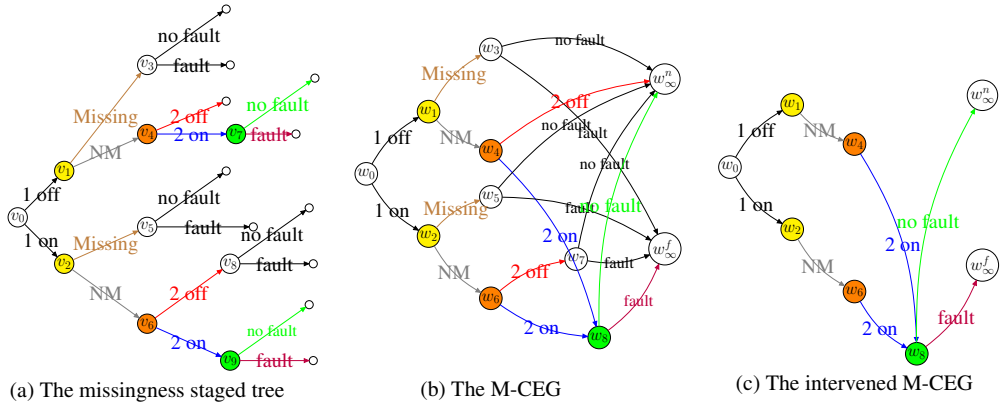

 \centering
    \resizebox{0.8\linewidth}{!}{
    \tikz{
    \node[latent,font=\fontsize{20}{0}\selectfont](v0){$v_0$};
    \node[latent,above right = 2cm of v0,font=\fontsize{20}{0}\selectfont,fill=yellow](v1){$v_1$};
    \node[latent,below right = 2cm of v0,font=\fontsize{20}{0}\selectfont,fill=yellow](v2){$v_2$};
    \node[latent,above right = 4cm and 3cm of v1,font=\fontsize{20}{0}\selectfont](v3){$v_3$};
    \node[latent,above right = 1cm and 3cm of v1,font=\fontsize{20}{0}\selectfont,fill=orange](v4){$v_4$};
    \node[latent,above right = 1cm and 3cm of v2,font=\fontsize{20}{0}\selectfont](v5){$v_5$};
    \node[latent,below right = 2cm and 3cm of v2,fill=orange,font=\fontsize{20}{0}\selectfont](v6){$v_6$};
    \node[latent,scale=0.5,above right = 2cm and 3cm of v3](s1){};
    \node[latent,scale = 0.5, right = 3cm of v3](s2){};
    \node[latent,above right = 1cm and 3cm of v4,scale=00.5](v7){};
    \node[latent,right = 3cm of v4,font=\fontsize{20}{0}\selectfont,fill=green](v8){$v_7$};
    \node[latent,above right = 2cm and 3cm of v5,scale=0.5](v9){};
    \node[latent,right =4cm of v5,scale=0.5](v10){};
    \node[latent,above right = 2cm and 3cm of v6,font=\fontsize{20}{0}\selectfont](v11){$v_{8}$};
    \node[latent,below right = 1cm and 3cm of v6,font=\fontsize{20}{0}\selectfont,fill=green](v12){$v_{9}$};
    \node[latent,above right = 2cm and 3cm of v8,scale=0.5](s3){};
    \node[latent, right =2cm of v8,scale=0.5](s4){};
    \node[latent,above right = 2cm and 3cm of v11,scale = 0.5](s7){};
    \node[latent, right =  3cm of v11,scale = 0.5](s8){};
    \node[latent,above right = 2cm and 3cm of v12,scale = 0.5](s9){};
    \node[latent,right = 3cm of v12,scale = 0.5](s10){};
    \node[below = 2cm of v6,font=\fontsize{27}{0}\selectfont](t1){(a) The missingness staged tree};
    \node[latent,font=\fontsize{20}{0}\selectfont, above right = 1cm and 2cm of s7](w0){$w_0$};
    \node[latent,above right = 2cm and 2cm of w0,font=\fontsize{20}{0}\selectfont,fill=yellow](w1){$w_1$};
    \node[latent,below right = 2cm and 2cm of w0,font=\fontsize{20}{0}\selectfont,fill=yellow](w2){$w_2$};
    \node[latent,above right = 2cm and 2cm of w1,font=\fontsize{20}{0}\selectfont](w3){$w_3$};
    \node[latent,below right = 1cm and 2cm of w1,font=\fontsize{20}{0}\selectfont,fill=orange](w4){$w_4$};
    \node[latent,above right = 1cm and 2cm of w2,font=\fontsize{20}{0}\selectfont](w5){$w_5$};
    \node[latent,below right = 2cm and 2cm of w2,font=\fontsize{20}{0}\selectfont,fill=orange](w6){$w_6$};
    \node[latent,above right = 1cm and 3cm of w6,font=\fontsize{20}{0}\selectfont](w7){$w_7$};
    \node[latent,below right = 1cm and 4cm of w6,font=\fontsize{20}{0}\selectfont,fill=green](w8){$w_8$};
    \node[latent,above right = 3cm and 2cm of w8,font=\fontsize{20}{0}\selectfont](wf){$w_\infty^f$};
    \node[latent,above = 6cm of wf,font=\fontsize{20}{0}\selectfont](wn){$w_\infty^n$};
    \node[below = 3cm of w6,font=\fontsize{27}{0}\selectfont](t2){(b) The M-CEG};
    
    
     \node[latent,below right = 3cm and 2cm of wn,font=\fontsize{20}{0}\selectfont](f0){$w_0$};
    \node[latent,above right = 2cm and 2cm of f0,font=\fontsize{20}{0}\selectfont,fill=yellow](f1){$w_1$};
    \node[latent,below right = 2cm and 2cm of f0,font=\fontsize{20}{0}\selectfont,fill=yellow](f2){$w_2$};
    \node[latent,below right = 1cm and 2cm of f1,font=\fontsize{20}{0}\selectfont,fill=orange](f4){$w_4$};
    \node[latent,below right = 2cm and 2cm of f2,font=\fontsize{20}{0}\selectfont,fill=orange](f6){$w_6$};
    \node[latent,below right = 2cm and 2cm of f6,font=\fontsize{20}{0}\selectfont,fill=green](f8){$w_8$};
    \node[latent,above right = 3cm and 2cm of f8,font=\fontsize{20}{0}\selectfont](wf1){$w_\infty^f$};
    \node[latent,above = 6cm of wf1,font=\fontsize{20}{0}\selectfont](wn1){$w_\infty^n$};
    \node[below = 3cm of f6,font=\fontsize{27}{0}\selectfont](t3){(c) The intervened M-CEG};
    \path[->,draw]
    (v0)edge node[scale=2.5]{\text{1 off}} (v1)
    (v0)edge node[scale=2.5]{\text{1 on}} (v2)
    (v1)edge[color=brown] node[scale=2.5]{Missing} (v3)
    (v1)edge[color=gray] node[scale=2.5]{NM} (v4)
    (v2)edge[color=brown] node[scale=2.5]{Missing} (v5)
    (v2)edge[color=gray] node[scale=2.5]{NM} (v6)
    (v3)edge node[scale=2.5]{no fault}(s1)
    (v3)edge node[scale=2.5]{fault}(s2)
    (v4)edge[color=red] node[scale=2.5]{2 off} (v7)
    (v4)edge[color=blue] node[scale=2.5]{2 on} (v8)
    (v5)edge node[scale=2.5]{no fault} (v9)
    (v5)edge node[scale=2.5]{fault} (v10)
    (v6)edge[color=red] node[scale=2.5]{2 off} (v11)
    (v6)edge[color=blue] node[scale=2.5]{2 on} (v12)
    (v8)edge[color=green] node[scale=2.5]{no fault}(s3)
    (v8)edge[color=purple] node[scale=2.5]{fault} (s4)
    (v11)edge node[scale=2.5]{no fault}(s7)
    (v11)edge node[scale=2.5]{fault}(s8)
    (v12)edge[color=green] node[scale=2]{no fault}(s9)
    (v12)edge[color=purple] node[scale=2.5]{fault}(s10)
   
    
     (w0)edge[bend left=15] node[scale=2.5]{\text{1 off}} (w1)
    (w0)edge[bend right=15] node[scale=2.5]{\text{1 on}} (w2)
    (w1)edge[bend left=15,color=brown] node[scale=2.5]{\text{Missing}}(w3)
    (w1)edge[bend right=15,color=gray] node[scale=2.5]{\text{NM}}(w4)
    (w2)edge[bend left=15,color=brown] node[scale=2.5]{\text{Missing}}(w5)
    (w2)edge[bend right=15,color=gray] node[scale=2.5]{\text{NM}}(w6)
    (w3)edge[bend left=25] node[scale=2]{\text{no fault}}(wn)
    (w3)edge[bend left=15] node[scale=2]{\text{fault}}(wf)
    (w4)edge[bend left=15,color=red] node[scale=2.5]{\text{2 off}}(wn)
    (w4)edge[bend left=15,color=blue] node[scale=2.5]{\text{2 on}}(w8)
    (w5)edge[bend left=15] node[scale=2]{\text{no fault}}(wn)
    (w5)edge[bend left=15] node[scale=2]{\text{fault}}(wf)
    (w6)edge[bend left=15,color=red] node[scale=2.5]{\text{2 off}}(w7)
    (w6)edge[bend right=15,color=blue] node[scale=2.5]{\text{2 on}}(w8)
    (w7)edge[bend left=15] node[scale=2]{\text{no fault}}(wn)
    (w7)edge[bend right=15] node[scale=2]{\text{fault}}(wf)
    (w8)edge[bend left=15,color=green] node[scale=2.5]{\text{no fault}}(wn)
    (w8)edge[bend right=15,color=purple] node[scale=2]{\text{fault}}(wf)
    (f0)edge[bend left=15] node[scale=2.5]{\text{1 off}} (f1)
    (f0)edge[bend right=15] node[scale=2.5]{\text{1 on}} (f2)
    (f1)edge[bend right=15,color=gray] node[scale=2.5]{\text{NM}}(f4)
    (f2)edge[bend right=15,color=gray] node[scale=2.5]{\text{NM}}(f6)
    (f4)edge[bend left=15,color=blue] node[scale=2.5]{\text{2 on}}(f8)
    (f6)edge[bend right=15,color=blue] node[scale=2.5]{\text{2 on}}(f8)
    (f8)edge[bend left=15,color=green] node[scale=2.5]{\text{no fault}}(wn1)
    (f8)edge[bend right=15,color=purple] node[scale=2.5]{\text{fault}}(wf1)
    }}
    \caption{The missingness staged tree and the corresponding CEG for the example in Section \ref{sec:missing}. The stages in the missingness staged tree are $\{v_0\},\{v_1,v_2\},\{v_3\},\{v_4,v_6\},\{v_5\},\{v_7,v_9\},\{v_8\}$. Subgraph (c) is used for recovering the causal identifiability.}
    \label{fig:example m-ceg}
\end{figure}

\begin{example}\label{missing example}
We use an example in \citep{Thwaites13} to demonstrate how to use the M-back-door theorem for a singular manipulation on an M-CEG. Figure \ref{fig:example f-ceg}(a) depicts the staged tree for this system which has two warning lights that indicate possible faults in two components. There are four stages in this tree: $\{v_0\},\{v_1,v_2\},\{v_3,v_5\},\{v_4\}$, where $v_3$ and $v_5$ are in the same position. Figure \ref{fig:example f-ceg}(b) is the fact CEG elicitepd from the fact staged tree. The stages are $u_1=\{w_0\},u_2=\{w_1,w_2\},u_3=\{w_3\},u_4=\{w_4\}$.

Suppose the florets $F(v_1)$ and $F(v_2)$ are unobservable and they have the same probability of being missing. Then the missingness staged tree is constructed in Figure \ref{fig:example m-ceg}(a). Vertices that are at the same stage in the fact staged tree are still at the same stage in the missingness staged tree. So we can elicit an M-CEG from the M-staged tree, see Figure \ref{fig:example m-ceg}(b). The stages are $u_1^M=\{w_0\},u_2^M=\{w_1,w_2\},u_3^M=\{w_3\},u_4^M=\{w_4,w_6\},u_5^M=\{w_5\},u_6^M=\{w_7\},u_7^M=\{w_8\}$.

If we force the second light to be on, that is ``2 on'' must be observed, then on a F-CEG, when we are interested in machine's failure, we can choose the back-door partition $\Lambda_{w(z)}$ so that $w(z)\in u_2$. Thus
\begin{equation}\label{equ:int}
    \begin{split}
        \pi(\Lambda_{w_\infty^f}||\Lambda_{w_4}) =\pi(\Lambda_{w_\infty^f}|\Lambda_{w_4},\Lambda_{w_1})\pi(\Lambda_{w_1}) + \pi(\Lambda_{w_\infty^f}|\Lambda_{w_4},\Lambda_{w_2})\pi(\Lambda_{w_2}).
    \end{split}
\end{equation}

On the M-CEG, we can still use the indicator of whether the first light is on to create the back-door partition, where $\Lambda_z=\Lambda_{e_{w_0,w_1}}\cup\Lambda_{e_{w_0,w_2}}$. Note that this indicator is always observed. We can now draw an intervened M-CEG where the controlled event, the effect event and the block set are all observable. This is shown in Figure \ref{fig:example m-ceg}(c). Then equation \ref{equ:int} can be recovered.
\end{example}

\section{Discussion} There are of course other classes of model that could further refine the process we describe in this paper. An exciting possibility is to study an alternative hierarchical model where the familiar BN textual extraction is replaced by a more expressive CEG expression. Early results on this suggest that the best scoring CEG outperforms the BN \citep{Barclay2013,Cowell14} and addresses the problem of data sparsity. However we need to carefully define the variables and the ordering of variables to avoid creating a CEG with an opaque topology and a slow selection algorithm since the search space across CEGs is of orders of magnitude greater than that of BNs.

A current priority is for us to produce algorithms that enable us to scale up the code of the methodology described here so that the massive reports within this domain can be quickly used to refine inference about system faults. Intervention calculi can be designed for other types of maintenance existed in reliability, for example, we will show how to identify the causal effects in the routine intervention regime in our recent paper \citep{Yu2021}.

Despite these ongoing challenges we believe potential of formal methods for representing and assessing the magnitude of causal effects based on reports such as those produced by engineers are enormous. We note that although the technologies we have described here have been customised to the needs of system reliability many of the concepts and algorithms translate to other domain including the systematic study of electronic health records (EHR) for medication purpose. The effects of medical interventions can be analysed by paralleling the intervention calculus introduced in this paper in a way we will describe in forthcoming work.

\bibliographystyle{unsrtnat}


\end{document}